\DeclareMathOperator*{\argmax}{arg\,max}
\DeclareMathOperator*{\argmin}{arg\,min}
\tikzstyle{SimpleNetwork}=[draw,circle,minimum width=6 pt]
\def \etal {\emph{et al.}}
\theoremstyle{definition}
\newtheorem{theorem}{Theorem}
\newtheorem{proposition}{Proposition}
\newtheorem{corollary}{Corollary}
\newtheorem{definition}{Definition}
\newtheorem{example}{Example}
\newtheorem{remark}{Remark}
\def \bs {\boldsymbol}
\def \mc {\mathcal}
\def\prob{\mathbb{P}}
\def\expt{\mathbb{E}}
\def\real{\mathbb{R}}
\def\naturals{\mathbb{N}}
\newcommand{\until}[1]{\{1,\dots, #1\}}
\newcommand{\supscr}[2]{#1^{\textup{#2}}}
\newcommand{\setdef}[2]{\{#1 \; | \; #2\}}
\newcommand{\seqdef}[2]{\{#1\}_{#2}}
\newcommand{\map}[3]{#1: #2 \rightarrow #3}
\newcommand{\slfrac}[2]{\left.#1\middle/#2\right.}
\newcommand{\lnn}[1]{%
	\ln  \left(#1\right)%
}
\newcommand{\expp}[1]{%
	\exp \! \left(#1\right)%
}
\newcommand\oprocendsymbol{\hbox{$\square$}}
\newcommand\oprocend{\relax\ifmmode\else\unskip\hfill\fi\oprocendsymbol}
\newcommand\bit[1]{\textit{\textbf{#1}}}
\newcommand{\vast}{\bBigg@{4}}
\newcommand{\Vast}{\bBigg@{5}}
\begin{document}

\begin{frontmatter}

\title{Distributed Cooperative Decision Making in Multi-agent Multi-armed Bandits \tnoteref{lab}}
\tnotetext[lab]{This research has been supported in part by ONR grants  N00014-14-1-0635, N00014-19-1-2556,  ARO grants W911NF-14-1-0431, W911NF-18-1-0325 and by the Department of Defense (DoD) through the National Defense Science \& Engineering Graduate Fellowship (NDSEG) Program.}

\author[label1]{Peter Landgren}
\author[label2]{ Vaibhav Srivastava}
\author[label1]{Naomi Ehrich Leonard}

\address[label1]{Department of Mechanical \& Aerospace Engineering, Princeton University, New Jersey, USA, \emph{\tt{ \{landgren, naomi\}@princeton.edu}}}
\address[label2]{Department of Electrical and Computer Engineering, Michigan State University, East Lansing, MI, USA, \tt{vaibhav@egr.msu.edu}}
\begin{abstract}
	We study a distributed decision-making problem in which multiple agents face the same multi-armed bandit (MAB), and each agent makes sequential choices among arms to maximize its own individual reward. The agents cooperate by sharing their estimates over a fixed communication graph. We consider an unconstrained reward model in which two or more agents can choose the same arm and collect independent rewards. And we consider a constrained reward model in which agents that choose the same arm at the same time receive no reward. We design a dynamic, consensus-based, distributed estimation algorithm for cooperative estimation of mean rewards at each arm. We leverage the estimates from this algorithm to develop two distributed algorithms: coop-UCB2 and coop-UCB2-selective-learning, for the unconstrained and constrained reward models, respectively. We show that both algorithms achieve group performance close to the performance of a centralized fusion center.  Further, we investigate the influence of the communication graph structure on performance. We propose a novel graph explore-exploit index that predicts the relative performance of groups in terms of the communication graph, and we propose a novel nodal explore-exploit centrality index that predicts the relative performance of agents in terms of the agent locations in the communication graph. 
\end{abstract}

\begin{keyword}
multi-armed bandits \sep multi-agent systems \sep distributed decision making \sep explore-exploit dilemma
\end{keyword}

\end{frontmatter}

\section{Introduction}
Many engineered and natural systems are faced with the challenge of decision making under uncertainty, in which an agent must make decisions among alternatives while still learning about those options.  Decision making under uncertainty inherently features the \emph{explore-exploit tradeoff}, where one must decide between selecting options with a high expected payoff (exploitation) and selecting options with less well-known but potentially better payoff (exploration).   Often systems feature multiple networked decision makers, where performance of the system may require \emph{cooperative} decision making, in which disparate and distributed elements of a group act collaboratively.

The explore-exploit tradeoff {can be} formally investigated {within the context of} the multi-armed bandit (MAB) problem.  In a stochastic MAB problem, an agent is presented with a set of arms (options), and each arm is represented by a stochastic reward with a mean that is unknown to the agent.  An agent's goal is to select arms sequentially in order to maximize its own cumulative expected reward over time.  Good performance in the MAB problem requires an agent to balance learning the mean reward of each arm (exploration) with choosing the arm with the highest estimated mean (exploitation).  

The explore-exploit tradeoff has been widely investigated using the MAB problem across a variety of scientific fields and has found diverse application in control and robotics~\cite{VS-PR-NEL:14, MYC-JL-FSH:13},  ecology~\cite{JRK-AK-PT:78, VS-PR-NEL:13}, and communications~\cite{anandkumar2011distributed}. The MAB problem, and particularly the classical single-agent variant, has been studied extensively (see~\cite{SB-NCB:12} for a survey).  In \cite{lai1985asymptotically}, Lai and Robbins established a limit on the expected performance of any optimal policy in a frequentist setting by proving a lower bound on the number of times an agent selects a sub-optimal arm.

To date most research on the MAB problem has focused on single-agent policies, but the rising importance of networked systems and large-scale information networks have motivated the investigation of the MAB problem with multiple agents. 
In this paper, we study two variants of the multi-agent MAB problem, in which each agent makes choices to maximize its own individual reward but cooperates by communicating its estimates across a network. 
The first variant assumes an {\em unconstrained reward} model, in which agents are not penalized if they choose the same arm at the same time.  
The second variant assumes a \emph{constrained reward}, in which agents that choose the same arm at the same time receive a reduced reward.  Consider agents in a remote setting choosing among communication channels to send data back to a base station.  The constrained reward model applies to the case in which data cannot be sent if agents choose the same channel.  The constrained reward model can also be used to prevent mobile agents from searching for resource in the same patch when there exist multiple resource-rich patches.


When a centralized fusion center that has access to all the information available to every agent decides which arms will be sampled by the agents, the agents are inherently coordinated and no two agents ever sample the same arm at the same time. In this setting, the above two variants become almost the same. Anantharam~\etal~\cite{VA-PV-JW:87} extended the classical single-agent MAB problem to the setting of such a fusion center and derived a fundamental lower bound on the performance of the fusion center. In this paper, we design {\em distributed} algorithms that yield group performance close to that of a centralized fusion center. 

Kolla~\etal~\cite{KollaJG16} and Landgren~\etal~\cite{PL-VS-NEL:18c} studied the multi-agent MAB problem under the unconstrained reward model. In their setup, each agent can share its actions and the associated rewards at each time with its neighbors in the communication graph. In this setting, group performance improves when each agent acts individually.  However, group performance might not be close to the performance of a centralized fusion center, especially for large sparse networks. Madhushani and Leonard~\cite{madhushani2019heterogeneous,madhushani2020observation} have extended this setting to examine dynamic interactions among agents  governed by a heterogeneous stochastic process and to design strategies that minimize sampling regret as well as communication costs.

Several researchers ~\cite{anandkumar2011distributed,liu2010distributed,kalathil2014decentralized,gai2014distributed,wei2018distributed} have studied the distributed multi-agent MAB problem under the constrained reward model. In these works, agents seek to converge on the set of best arms, but they do not explicitly communicate with one another.  
In~\cite{kalathil2014decentralized,gai2014distributed}, agents are ranked and they target the best arm  associated with their rank. Anandkumar~\etal~\cite{anandkumar2011distributed} also studied distributed policies for agents to learn their ranks while solving the multi-agent MAB problem. Bistritz and Leshem~\cite{bistritz2018distributed} studied the distributed multi-agent MAB problem under no communication among agents. Assuming no a priori ranking of agents, they developed a game-of-thrones algorithm, inspired by~\cite{marden2014achieving}, to enable coordination among agents.


Shahrampour~\etal~\cite{shahrampour_rakhlin_jadbabaie_2017}  studied a variant of the multi-agent MAB problem in which the reward associated with each arm may be different for every agent. The best arm is defined as the arm with the maximum average mean reward over all agents. Unlike in other multi-agent MAB setups, in which each agent makes a decision at each time, they consider a single group decision obtained using a majority rule on individual decisions.

 In early versions~\cite{arxiv:LandgrenSL15,DBLP:journals/corr/LandgrenSL16} of the present work, we studied distributed cooperative decision making in the multi-agent MAB problem with the unconstrained reward model. In comparison, this paper considers a broader class of reward distributions and studies both the unconstrained and constrained reward models. We present new detailed proofs that improve on 
 the preliminary versions. We also present a much broader exploration of the 
 influence of communication graph structure on individual and group decision-making performance. 

Mart{\'\i}nez-Rubio~\etal~\cite{martinez2019decentralized} extended our preliminary versions~\cite{arxiv:LandgrenSL15,DBLP:journals/corr/LandgrenSL16} 
in the context of the unconstrained reward model. Their work is complementary to the approach discussed here. A key difference between their algorithm and the algorithm discussed in this paper, is that our algorithm requires only the knowledge of total number of agents to tune the decision-making heuristic, while their algorithm requires the knowledge of the spectral gap of the communication graph.  They do not investigate the influence of the network graph on performance. 


In this paper, we study distributed cooperative decision making in the multi-agent MAB problem under both unconstrained and constrained reward models.  We use a set of running consensus algorithms for cooperative estimation of the mean reward at each arm over an undirected graph and develop algorithms for individual decision making based on these estimates for both reward models. We also derive measures of graph structure that are predictive of 
individual as well as group performance. The major contributions of the paper are as follows.

%

First, we employ and rigorously analyze running consensus algorithms for distributed cooperative estimation of mean reward at each arm, and we derive bounds on key quantities. 

Second, we propose and thoroughly analyze  the coop-UCB2 algorithm for the multi-agent MAB problem under the unconstrained reward model and sub-Gaussian reward distributions. 

Third, we propose and thoroughly analyze the coop-UCB2-selective-learning algorithm for the multi-agent MAB problem under the constrained reward model and sub-Gaussian reward distributions.  

Fourth, we utilize the derived bounds on the decision-making performance of the group to introduce a novel graph explore-exploit index that predicts the ordering of graphs in terms of group explore-exploit performance and a novel nodal explore-exploit centrality index as a function of an agent's location in a graph that predicts the ordering of agents in terms of individual explore-exploit performance. We illustrate the effectiveness of these indices with simulations.

The remainder of the paper is organized as follows.  In Section~\ref{sec:setup} we describe the multi-agent MAB problem studied in this paper and introduce some background material.  In Section~\ref{sec:coop-est} we present and analyze the cooperative estimation algorithm. We propose and analyze the coop-UCB2 algorithm in Section~\ref{sec:DistributedDecisionMaking} and the coop-UCB2-selective-learning algorithm in Section \ref{sec:Coop-ucb2-collisions}. We illustrate our analytic results with numerical examples in Section~\ref{NetworkPerformanceAnalysis}. We conclude in Section~\ref{FinalRemarks}.

\section{Problem Description} \label{sec:setup}
We consider a distributed multi-agent MAB problem in which $M$ agents make sequential choices among the same set of $N$ arms with the goal of maximizing their individual reward. The $M$ agents cooperate by sharing their estimates over a bi-directional communication network. The network is modeled by an undirected graph $\mathcal{G}$ in which each node represents a decision-making agent and edges represent the communication links between them~\cite{FB-JC-SM:09}. Let $A\in \real^{M\times M}$ be the adjacency matrix associated with $\mc G$ and  $L \in \real^{M\times M}$ the corresponding Laplacian matrix. We assume that the graph $\mc G$ is connected, i.e., there exists a path between every pair of nodes.

Let the reward associated with arm $i\in \until{N}$ be a stationary random variable with an unknown mean $m_i$.  Using its local information, each agent $k \in \until{M}$  selects arm $i^k(t)$ at time $t \in \until{T}$, where $T \in \naturals$ is the time horizon. 

We study two reward  models that determine how the reward associated with arm $i^k(t)$ is received by agent $k$. In the \emph{unconstrained reward model}, agent $k$ receives a reward equal to the realized value of the reward at arm $i^k(t)$, irrespective of the choices of the other agents. In the \emph{constrained reward model},  agent $k$ receives a reward equal to the realized value of the reward at arm $i^k(t)$, only if it  is the only agent to select arm $i^k(t)$ at time $t$; otherwise it receives no reward. 

The objective of the distributed cooperative multi-agent MAB problem is to maximize the expected cumulative group reward. This objective is equivalent to minimizing the \emph{expected cumulative group regret} defined by the difference between the best possible expected cumulative group reward and the achieved expected cumulative group reward. 

Let {$\seqdef{b^i}{i\in\until{N}}$} be the permuted sequence of arms such that  $m_{b^1} > m_{b^2} > \cdots > m_{b^N}$ {\footnote{{We rely on the assumption that $m_{b^i} \neq m_{b^j}$ for all $i,j$ for the constrained reward model; it can be relaxed for the unconstrained reward model.}}}.
Under the unconstrained reward model, the expected cumulative group regret is defined by
\begin{equation}
\supscr{R}{unc}_T = MT m_{b^1} - \sum_{t=1}^T \sum_{k=1}^M   m_{i^k(t)} = \sum_{i=1}^N \sum_{k=1}^M \Delta_i \expt[n_i^k(T)],
\label{Runc}
\end{equation}
where $n_i^k(T)$ is the total number of times arm $i$ is selected by  agent $k$  until time $T$ and $\Delta_i = m_{b^1}-m_i$. In the following, we use $b^1$ and $i^*$ interchangeably to denote the arm with the highest mean reward.  {Under the unconstrained reward model, the regret at time $t$ is minimized if every agent chooses arm $i^*$.}

Similarly, under the constrained reward model {and assuming $M \leq N$}, the expected cumulative group regret is defined by
\begin{equation}
\supscr{R}{con}_T = T \sum_{k=1}^M m_{b^k} - \sum_{t=1}^T \sum_{k=1}^M   m_{i^k(t)} \mathbb{I}_{i^k(t)}^k(t), 
\label{Rcon}
\end{equation}
where $\mathbb{I}_{i}^k(t) = 1$ if agent $k$ is the only agent to sample arm $i$ at time $t$, and $0$ otherwise. In the following, we  denote the set of {$j$ best arms by $\mc O_j^* =\{b^1, \ldots, b^j\}$.  Under the constrained reward model, the regret at time $t$ is minimized if each agent chooses a different arm in the set $\mc O_M^*$.}

Let $p_i$ be the probability distribution of the reward associated with arm $i$. For a centralized fusion center  that has access to information available to each agent, and under the unconstrained reward model, the  lower bound 
\begin{equation}
	\sum_{k=1}^M \expt[n_i^k (T)] \ge \left(\frac{1}{ \mathcal{D} (p_i || p_{i^*})}  +o(1) \right) \ln T \label{eqn:fusioncenterregret}
\end{equation}
holds asymptotically as $T \to +\infty$ for any suboptimal arm $i \ne i^*$~\cite{lai1985asymptotically,VA-PV-JW:87}. Here, 
$\mathcal{D} (p_i || p_{i^*})$ represents the Kullback-Leibler divergence between $p_i$ and {$p_{i^*}$}. Substituting, the lower bound on {$\sum_k \expt[n_i^k(T)]$} in~\eqref{eqn:fusioncenterregret} into the expression \eqref{Runc} for $\supscr{R}{unc}_T$ yields
\begin{equation}\label{eqn:regret-unc-lower}
\supscr{R}{unc}_T \ge  \sum_{i \ne i^*} \left(\frac{\Delta_i}{ \mathcal{D} (p_i || p_{i^*})}  +o(1) \right) \ln T. 
\end{equation}

For the constrained reward model, consider a centralized fusion center that has access to the information available to each agent and can assign the arm to be selected by each agent at each time. For such a fusion center, no two agents ever select the same arm at the same time, and, under the constrained reward model, the lower bound
\begin{equation}
\sum_{k=1}^M	 \expt[n_i^k(T)] \ge \left(\frac{1}{ \mathcal{D} (p_i || {p_{b^M})}}  +o(1) \right) \ln T \label{eqn:fusioncenterregret-cons}
\end{equation}
holds asymptotically as $T \to +\infty$ for any suboptimal arm {$i \notin \mc O_M^*$}~\cite{VA-PV-JW:87}. Thus, the asymptotic regret of the fusion center satisfies
\begin{equation}\label{eqn:regret-cons-lower}
\supscr{R}{con}_T \ge {\sum_{i\in \{b^{M+1}, \ldots, b^N\}}} \left(\frac{\Delta_i}{ \mathcal{D} (p_i || {p_{b^M})}}  +o(1) \right) \ln T.
\end{equation}
Note that the expected regret under the constrained reward model is higher if multiple agents select the same arm. Thus, the above lower bound holds even if agents themselves make arm selections instead of being assigned an arm by the fusion center. The situation in which multiple agents select the same arm is referred to as a \emph{collision}.

Our objective in this paper is to design a distributed cooperative algorithm estimating mean reward at each arm  and  a decision-making algorithm for each agent that yields expected cumulative group regret close to that of a {centralized fusion center}. 
We consider rewards drawn from a sub-Gaussian distribution. 

\begin{definition}[\bit{Sub-Gaussian random variable~\cite{boucheron_2016}}]\label{defn:sub-gaussian}
A real-valued random variable $X$, with $\expt[X] =m \in \real,$ is sub-Gaussian if 	
\begin{equation*}
	\phi_X (\beta) \leq m \beta +\frac{\sigma_g^2 \beta^2}{2},
\end{equation*}
where $\sigma_g \in \real_{>0}$, $\beta \in \real$, and $\map{\phi_X}{\real}{\real}$ is  the cumulant generating function of $X$ defined by
\[
\phi_X(\beta) = \lnn{\mathbb{E}[\expp{\beta X}]}.
\]
\end{definition}
Sub-Gaussian distributions include Bernoulli, uniform, and Gaussian distributions, and distributions with bounded support.

\section{Cooperative Estimation of Mean Rewards} \label{sec:coop-est}
In this section we study  cooperative estimation of mean rewards at each 
arm. We propose two running (dynamic) consensus algorithms~\cite{braca2008enforcing,olfati2004consensus} for each arm and analyze performance.

\subsection{Cooperative Estimation Algorithm}
For distributed cooperative estimation of the mean reward at each arm $i$, we propose two running consensus algorithms: (i) for estimation of total reward provided at arm $i$, and (ii) for estimation of the total number of times arm $i$ has been sampled. 

Let $\hat{s}_i^k(t)$  be agent $k$'s estimate of the total reward provided at arm $i$ until time $t$ per unit agent. Let $\hat{n}_i^k(t)$ be agent $k$'s estimate of the total number of times arm $i$ has been selected until time $t$ per unit agent. Recall that $i^k(t)$ is the arm sampled by agent $k$ at time $t$ and let $\xi_i^k(t) = \mathds{1} (i^k(t) =i)$.   $\mathds{1}(\cdot)$ is the indicator function, here equal to 1 if $i^k(t)= i$ and 0 otherwise. 
{For all $i$ and $k$,} we define $r_i^k(t)$ as the realized reward at arm $i$ for agent $k$ at time $t$, which is 
a random variable sampled from a sub-Gaussian distribution. The corresponding reward received by agent $k$ at time $t$ is $r^k(t) = r_i^k(t) \cdot \mathds{1} (i^k(t) =i)$.

The estimates $\hat{s}_i^k(t)$ and $\hat{n}_i^k(t)$ are updated using running consensus as follows
\begin{align} \label{nhatdefn}
\mathbf{\hat{n}}_i(t) &= {P \left( \mathbf{\hat{n}}_i(t-1) + \boldsymbol{\xi}_i(t)\right)}, \\
\text{and} \quad \mathbf{\hat{s}}_i(t) &= {P \left(\mathbf{\hat{s}}_i(t-1) + \mathbf{r}_i(t)\right)}, \label{shatdefn}
\end{align}
where $\mathbf{\hat{ n}}_i(t)$, $\mathbf{\hat{ s}}_i(t)$, $\boldsymbol{\xi}_i(t)$, and $\mathbf{r}_i(t)$ are vectors of $\hat n_i^k(t)$, $\hat s_i^k(t)$, $\xi_i^k(t)$, and $r_i^k(t) \cdot \mathds{1} (i^k(t) =i)$, $k\in \until{M}$, respectively; $P$ is a row stochastic matrix given by
\begin{equation} 
P= \mathcal{I}_M - \frac{\kappa}{d_{\text{max}}} L. \label{Pdefn}
\end{equation}
$\mathcal{I}_M$ is the identity matrix of order $M$, $\kappa \in (0,1]$ is a step size parameter \cite{olfati2004consensus}, $d_{\text{max}} =\max \setdef{\text{deg}(i)}{i \in \until{M}}$, and $\text{deg}(i)$ is the degree of node $i$. In the following, we assume without loss of generality, that the eigenvalues of $P$ are ordered such that $\lambda_1 = 1 > \lambda_2 \geq ... \geq \lambda_M > -1$. 

In the running consensus updates~\eqref{nhatdefn} and \eqref{shatdefn}, each agent $k$ collects information $\xi_i^k(t)$ and {$r^k(t)$} at time $t$, adds it to its current opinion, and then averages its updated opinion with the updated opinion of its neighbors.

 Using $\hat{s}_i^k(t) $ and  $\hat{n}_i^k(t)$, agent $k$ can calculate $\hat{\mu}_i^k(t)$, the estimated empirical mean of arm $i$ at time $t$ defined by
\begin{equation}
\hat{ \mu}_i^{k}(t) = \frac{\hat{s}_i^{k}(t)}{\hat{n}_i^{k}(t)}.  \label{eqnmean}
\end{equation}

\subsection{Analysis of the Cooperative Estimation Algorithm}
We now analyze the performance of the estimation algorithm defined by~\eqref{nhatdefn},~\eqref{shatdefn}~and~\eqref{eqnmean}. Let $n_i^{\text{cent}}(t) \equiv \frac{1}{M} \sum_{\tau=1}^t \mathbf{1}_M^\top \boldsymbol{\xi}_i(\tau) =\frac{1}{M} \sum_{k=1}^M n_i^k(t)$ be the total number of times arm $i$ has been selected per unit agent {until} time {$t$}, and let $s_i^{\text{cent}}(t) \equiv \frac{1}{M} \sum_{\tau=1}^t \boldsymbol{\xi}_i^\top(\tau) \mathbf{r}_i(\tau)$ be the
total reward provided at arm $i$ per unit agent {until} time $t$.  Let 
$\mathbf{u}_i$ be the eigenvector corresponding to $\lambda_i$, $u_i^d$  the $d$-th entry of $\mathbf{u}_i$. Note $\lambda_1=1$ and $\mathbf{u}_1 = \mathbf{1}_M/\sqrt{M}$. Let
\[
 \nu_{pj}^{\text{+sum}} = \sum_{d=1}^M  u_p^d u_j^d \mathds{1}(u_p^k u_j^k \geq 0),  
  \quad \nu_{pj}^{\text{-sum}} =  \sum_{d=1}^M u_p^d u_j^d \mathds{1}(u_p^k u_j^k \leq 0), 
\]
\begin{equation}
a_{pj}(k) =  \begin{cases}
\nu_{pj}^{\text{+sum}}u_p^k u_j^k, & 
\! \! \! \! \text{if } \lambda_p \lambda_j \geq 0 \; \&\; u_p^k u_j^k  \geq 0, \\
\nu_{pj}^{\text{-sum}}u_p^k u_j^k, &
\! \! \! \! \text{if }  \lambda_p \lambda_j \geq 0 \; \&\; u_p^k u_j^k  \leq 0, \\
\nu_{pj}^{\text{max}} |u_p^k u_j^k | , & \! \! \! \! \text{if }  \lambda_p \lambda_j < 0,
\end{cases}\label{apjdefn}
\end{equation}
where $\nu_{pj}^{\text{max}} = \max{\{ |\nu_{pj}^{\text{-sum}}|,  \nu_{pj}^{\text{+sum}}\}} $.

We define the {\em graph explore-exploit index} $\epsilon_n$ as
\begin{equation}
\epsilon_n =  \sqrt{M}  \sum_{p=2}^M \frac{|\lambda_p|}{1-|\lambda_p|},  \label{epsilonndef}
\end{equation}
and the {\em nodal explore-exploit centrality index} $\epsilon_c^k$ for node $k$ as 
\begin{equation}\label{eq:epsilon-c}
\epsilon_c^k   =    M \sum_{p=1}^M \sum_{j=2}^M \frac{|\lambda_p \lambda_j| }{1-|\lambda_p \lambda_j|} a_{pj}(k).
\end{equation}
Since $|\lambda_p|< 1$, for all $p \geq 2$, definitions \eqref{epsilonndef}-\eqref{eq:epsilon-c} imply that $\epsilon_n$ and $\epsilon_c^k$ decrease with a decrease in $|\lambda_p|$ for any $p \geq 2$. A small value of $\epsilon_n$ reflects a high level of symmetry and connectivity in the graph. This will be shown to predict low error in each agent's estimate of the average number of times a suboptimal arm has been chosen and thus high group explore-exploit performance. Dependence on the $k$-th component of the eigenvectors in \eqref{eq:epsilon-c} makes $\epsilon_c^k$ an index for node $k$, which measures how well agent $k$ estimates the second-order moments of rewards.  In an asymmetric graph, $\epsilon_c^k < \epsilon_c^l$ reflects a more favorable location in the graph for node $k$ as compared to node $l$. 

Both $\epsilon_n$ and $\epsilon_c^k$ depend only on the topology of the communication graph, yet they predict distributed cooperative estimation performance, as we show next, and  explore-exploit performance, as we show in subsequent sections.

\begin{proposition}[\bit{Performance of cooperative estimation}]\label{prop:coop-est}
For the distributed estimation algorithm defined in~\eqref{nhatdefn},~\eqref{shatdefn} and~\eqref{eqnmean}, and a doubly stochastic matrix $P$ defined in~\eqref{Pdefn}, the following statements hold:
\begin{enumerate}
\item the estimate $\hat n_i^k(t)$ satisfies
\begin{align*}
n_i^{\text{cent}}(t) - \epsilon_n \le  \hat{n}_i^k(t) \le   n_i^{\text{cent}}(t) + \epsilon_n;
\end{align*}
\item the following inequality holds for the estimate   $\hat {n}_i^k(t)$ and the sequence $\seqdef{\xi_i^j(\tau)}{\tau\in \until{t}}$, $ j \in \until{M}$:
\begin{equation*}
\sum_{\tau=1}^{t} \sum_{j=1}^M \left(\sum_{p=1}^M \lambda_p^{t-\tau+1}  u_p^k u_p^j  \right)^2 \xi_i^j(\tau) \leq \frac{ \hat n_i^k(t) + \epsilon_c^k}{M}.
\end{equation*}
\end{enumerate}
\end{proposition}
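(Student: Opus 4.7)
The plan is to exploit the spectral decomposition $P = \sum_{p=1}^M \lambda_p \mathbf u_p \mathbf u_p^\top$ of the symmetric doubly-stochastic matrix $P$, unroll the running-consensus recursion \eqref{nhatdefn}, isolate the leading eigenpair $(\lambda_1,\mathbf u_1) = (1,\mathbf 1_M/\sqrt{M})$ from the remaining transient modes, and bound the transient contribution via geometric sums of $|\lambda_p|^{t-\tau+1}$ combined with sign-aware bounds on eigenvector entries that match the structure of $\nu_{pq}^{\pm\text{sum}}$ and $a_{pq}(k)$.

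With $\hat{\mathbf n}_i(0)=\mathbf 0$, unrolling \eqref{nhatdefn} yields $\hat n_i^k(t) = \sum_{p=1}^M u_p^k \sum_{\tau=1}^t \lambda_p^{t-\tau+1} \sum_{j=1}^M u_p^j\,\xi_i^j(\tau)$, whose $p=1$ contribution equals $n_i^{\text{cent}}(t)$ exactly. For statement (i) I would bound the $p\ge 2$ remainder termwise using $|u_p^k|\le 1$, Cauchy--Schwarz ($\sum_j |u_p^j|\le\sqrt{M}\,\|\mathbf u_p\|_2 = \sqrt{M}$), $\xi_i^j(\tau)\le 1$, and the geometric sum $\sum_{\tau=1}^t |\lambda_p|^{t-\tau+1}\le |\lambda_p|/(1-|\lambda_p|)$; adding over $p\ge 2$ reproduces $\epsilon_n$ exactly.

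For statement (ii), expanding the inner square gives
\[
\sum_{\tau,j}\Bigl(\sum_p \lambda_p^{t-\tau+1} u_p^k u_p^j\Bigr)^{\!2}\xi_i^j(\tau)= \sum_{p,q} u_p^k u_q^k \sum_\tau (\lambda_p\lambda_q)^{t-\tau+1}\sum_j u_p^j u_q^j \xi_i^j(\tau).
\]
Partitioning $(p,q)$ according to which indices equal $1$, the $(1,1)$ block evaluates to $n_i^{\text{cent}}(t)/M$; using $\mathbf u_1=\mathbf 1_M/\sqrt{M}$, the two mixed blocks (one of $p,q$ equal to $1$, the other $\ge 2$) collapse to $(2/M)(\hat n_i^k(t)-n_i^{\text{cent}}(t))$; and the $(p,q\ge 2)$ block is a residual $R(k)$. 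The target inequality then reduces to the single sign-sensitive bound $R(k)+(\hat n_i^k(t)-n_i^{\text{cent}}(t))/M \le \epsilon_c^k/M$.

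The main obstacle is establishing this last bound in the precise form prescribed by \eqref{apjdefn}. The key observation is that, for any binary sequence $\{\xi_i^j(\tau)\}_j$, the sum $\sum_j u_p^j u_q^j \xi_i^j(\tau)$ lies in $[\nu_{pq}^{-\text{sum}},\nu_{pq}^{+\text{sum}}]$, with the extremes attained when $\xi$ selects exactly the indices where $u_p^j u_q^j$ has one sign. Combining this range with the signs of $u_p^k u_q^k$ and $(\lambda_p\lambda_q)^{t-\tau+1}$ across the three sign regimes of \eqref{apjdefn}, and then summing the geometric series in $\tau$, yields the tight per-pair estimate $|\lambda_p\lambda_q|/(1-|\lambda_p\lambda_q|)\,a_{pq}(k)$. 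Summing over $p,q\ge 2$ controls $R(k)$; applying the same three-case argument with one index held at $1$ bounds $(\hat n_i^k(t)-n_i^{\text{cent}}(t))/M$ by $\sum_{q\ge 2}|\lambda_q|/(1-|\lambda_q|)\,a_{1,q}(k)$. Adding these contributions reproduces $\epsilon_c^k/M = \sum_{p=1}^M\sum_{j=2}^M |\lambda_p\lambda_j|/(1-|\lambda_p\lambda_j|)\,a_{pj}(k)$ term by term, closing the proof.
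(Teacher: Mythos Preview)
Your proposal is correct and follows essentially the same strategy as the paper: unroll the recursion via the spectral decomposition of $P$, separate the $\lambda_1=1$ contribution, and bound the transient modes by geometric sums together with the sign-sensitive estimates encoded in $a_{pq}(k)$. The only difference is organizational. For statement~(ii), the paper performs an \emph{asymmetric} split of the double sum over $(p,w)$ into $\{w=1,\ p\in\{1,\dots,M\}\}$ and $\{w\ge 2,\ p\in\{1,\dots,M\}\}$; the first block is recognized \emph{directly} as $\hat n_i^k(t)/M$ (since $u_1^j=1/\sqrt{M}$ collapses the inner sum to the $k$-th entry of $P^{t-\tau+1}\boldsymbol{\xi}_i(\tau)$), and the second block is bounded term-by-term by $\frac{|\lambda_p\lambda_w|}{1-|\lambda_p\lambda_w|}a_{pw}(k)$, summing to $\epsilon_c^k/M$. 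Your symmetric four-way split reaches the same residual $\sum_{p\ge 1}\sum_{q\ge 2}(\cdots)$ after regrouping the $(1,1)$ block with one of the mixed blocks, and then you bound the $(p=1,q\ge2)$ and $(p\ge2,q\ge2)$ pieces separately before adding them back to $\epsilon_c^k/M$. This detour is harmless but unnecessary: recognizing that the entire $w=1$ column already equals $\hat n_i^k(t)/M$ saves a step and avoids computing the mixed blocks exactly only to re-bound one of them.
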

\begin{proof}
The proof uses some algebraic manipulations on the modal decomposition of~\eqref{nhatdefn}.  See~\ref{app:proof-prop1}. 
\end{proof}

We now  derive concentration bounds for the estimated mean computed with the cooperative estimation algorithm. Standard concentration inequalities, such as the Chernoff-Hoeffding inequality, rely on the sample size being independent of the realized values of samples. In the context of MABs, the arm selected at time $t$ depends on the rewards accrued at previous times. This makes the number of times an arm is sampled and the total reward accrued at that arm dependent random variables. For the case of a single agent, the specific kind of dependence between these random variables that occurs in MAB problems is leveraged to derive  a concentration inequality  in~\cite{AG-EM:08}.  In the following, we extend this concentration inequality to the distributed estimation algorithm studied here. 

For $i\in \{1,\dots,N\}$ and $k\in \{1,\dots,M\}$, $\seqdef{r_i^k(t)}{t \in \naturals}$  is a sequence of i.i.d.~sub-Gaussian {rewards} with mean $m_i \in \real$.   Let $\mathcal{F}_t$ be the filtration defined by the sigma-algebra of all the measurements until time $t$. Let $\seqdef{\xi_i^k(t)}{t \in \naturals}$  be a sequence of Bernoulli variables such that $\xi_i^k(t)$ is deterministically known given $\mc F_{t-1}$, i.e., $\xi_i^k(t)$ is pre-visible with respect to $\mc F_{t-1}$.
Let $\phi_i(\beta) = \lnn{\mathbb{E}\left[\expp{\beta r_i^k(t)}\right]}$ denote the cumulant generating function of $r_i^k(t)$.  

\begin{theorem}[\bit{Concentration bounds for the mean estimator}] \label{thm:EstDevBoudnsCondensed}
	For the estimates	$\hat{s}_i^k(t)$ and $\hat{n}_i^k(t)$ obtained using~\eqref{nhatdefn} and \eqref{shatdefn} given rewards drawn from a sub-Gaussian distribution as defined in Definition \ref{defn:sub-gaussian}, the following concentration inequality holds: 
	\begin{equation}
		\mathbb{P} \left( \frac{\hat{s}_i^k(t) \!-\! m_i \hat{n}_i^k(t)}{\left(\frac{1}{M} \left(\hat{n}_i^k(t) + \epsilon_c^k\right)\right)^{\slfrac{1}{2}}} > \delta  \right) <\Bigg\lceil \frac{\lnn{t \!+\! \epsilon_n}}{\lnn{1 + \eta}} \Bigg\rceil  \expp{\frac{-\delta^2}{2 \sigma_g^2} G(\eta) } ,
	\end{equation}
	where 	$\delta > 0$, $\eta \in (0,4)$, $G(\eta) = (1-\frac{\eta^2}{16})$, and $\epsilon_n$ and $\epsilon_c^k$  are defined in~\eqref{epsilonndef} and~\eqref{eq:epsilon-c}, respectively. 
\end{theorem}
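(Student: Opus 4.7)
The plan is to rewrite the numerator $Z_t^k := \hat s_i^k(t) - m_i\, \hat n_i^k(t)$ as an explicit weighted sum of centered rewards and then apply a self-normalized sub-Gaussian concentration bound. Iterating the running consensus updates~\eqref{nhatdefn}--\eqref{shatdefn} from zero initial conditions and using the spectral decomposition $[P^s]_{kj} = \sum_{p=1}^M \lambda_p^s u_p^k u_p^j$, the $k$-th component of $\hat{\mathbf{s}}_i(t) - m_i\hat{\mathbf{n}}_i(t)$ can be written as
\[
Z_t^k = \sum_{\tau=1}^t\sum_{j=1}^M w_{kj}(\tau)\bigl(r_i^j(\tau)-m_i\bigr),
\]
where $w_{kj}(\tau) = \bigl(\sum_{p=1}^M \lambda_p^{t-\tau+1} u_p^k u_p^j\bigr)\xi_i^j(\tau)$. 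Because $\xi_i^j(\tau)$ is $\mathcal{F}_{\tau-1}$-measurable and $r_i^j(\tau)-m_i$ is conditionally zero-mean and sub-Gaussian with parameter $\sigma_g$, the sequence $(Z_t^k)_t$ is a sum of predictably weighted sub-Gaussian martingale differences with respect to $(\mathcal{F}_t)$.

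Next I identify the predictable variance proxy $V_t^k := \sum_{\tau=1}^t\sum_{j=1}^M w_{kj}(\tau)^2$. Since $\xi_i^j(\tau)\in\{0,1\}$ implies $\xi_i^j(\tau)^2 = \xi_i^j(\tau)$, this expression is exactly the left-hand side appearing in Proposition~\ref{prop:coop-est}(ii), so that $V_t^k \le (\hat n_i^k(t)+\epsilon_c^k)/M$; that is, the squared denominator in the theorem deterministically dominates the predictable variance. Combining this with the sub-Gaussian bound on the conditional cumulant generating function of each increment (Definition~\ref{defn:sub-gaussian}) yields, by the usual Doob-type argument, that $\exp\!\bigl(\beta Z_t^k - \tfrac{1}{2}\sigma_g^2\beta^2 V_t^k\bigr)$ is a nonnegative $(\mathcal{F}_t)$-supermartingale of unit expectation for every real $\beta$.

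The main obstacle is that $V_t^k$ is itself random, so one cannot simply optimize $\beta$ after dividing by $V_t^k$ inside a Markov inequality. To handle this I would adapt the peeling (stratification) argument of Garivier and Moulines~\cite{AG-EM:08}: partition the admissible range of $V_t^k$ into geometric slices of ratio $1+\eta$ and, on each slice $V_t^k\in[v_\ell,v_\ell(1+\eta))$, invoke the supermartingale Chernoff inequality with a carefully tuned test parameter $\beta_\ell$. The range of $V_t^k$ is controlled by combining Proposition~\ref{prop:coop-est}(i) with $n_i^{\text{cent}}(t)\le t$, which bounds $V_t^k$ from above in terms of $t+\epsilon_n$ and therefore caps the number of slices at $\lceil \ln(t+\epsilon_n)/\ln(1+\eta)\rceil$; a union bound across slices yields the prefactor in the theorem. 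The delicate step is choosing $\beta_\ell$ so that upper-bounding $V_t^k$ by $v_\ell(1+\eta)$ in the exponent while lower-bounding $\sqrt{V_t^k}$ by $\sqrt{v_\ell}$ in the deviation threshold produces the factor $G(\eta)=1-\eta^2/16$ rather than the cruder $1/(1+\eta)$ that a naive peeling returns; getting this sharper constant right is where the bulk of the careful bookkeeping lies.
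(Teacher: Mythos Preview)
Your proposal matches the paper's argument in all essential respects: the spectral expansion of $Z_t^k$, the identification of $V_t^k$ with the left-hand side of Proposition~\ref{prop:coop-est}(ii), the exponential supermartingale $\exp\bigl(\beta Z_t^k-\tfrac12\sigma_g^2\beta^2 V_t^k\bigr)$, and the Garivier--Moulines peeling are exactly the ingredients used in Appendix~\ref{app:proof-thm-1}. One technical point needs adjusting, however: the paper does not peel over $V_t^k$ itself but over $\hat n_i^k(t)$. It first invokes Proposition~\ref{prop:coop-est}(ii) to replace $V_t^k$ by the larger quantity $(\hat n_i^k(t)+\epsilon_c^k)/M$ inside the Markov/Chernoff bound, and only then stratifies on $\hat n_i^k(t)\in[(1+\eta)^{h-1},(1+\eta)^h)$. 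This is precisely what makes the slice count come out to $\lceil\ln(t+\epsilon_n)/\ln(1+\eta)\rceil$: after initialization $\hat n_i^k(t)\ge 1$, and by Proposition~\ref{prop:coop-est}(i) $\hat n_i^k(t)\le n_i^{\text{cent}}(t)+\epsilon_n\le t+\epsilon_n$, so the range of $\hat n_i^k(t)$ is exactly $[1,t+\epsilon_n]$. By contrast, $V_t^k$ has no immediate uniform lower bound away from zero, so peeling over it directly would not control the number of slices as stated. With that fix, the paper's slice-wise choice $\Theta_h=\sigma_g^{-1}\sqrt{2aM/\bigl((1+\eta)^{h-1/2}+\epsilon_c^k\bigr)}$ produces the symmetric threshold $\sigma_g\sqrt{a/2}\,\bigl((1+\eta)^{1/4}+(1+\eta)^{-1/4}\bigr)$ on every slice, and the elementary inequality $4/\bigl((1+\eta)^{1/4}+(1+\eta)^{-1/4}\bigr)^2\ge 1-\eta^2/16$ yields $G(\eta)$.
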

\begin{proof}
The proof recursively computes a moment generating function of $\hat s_i^k(t)$ using modal decomposition of~\eqref{shatdefn} and conditioning on the appropriate filtration. It subsequently uses the Markov inequality and a peeling argument based on the union bound to establish the  inequality. See~\ref{app:proof-thm-1}.
\end{proof}

\section{Cooperative Decision Making:   Unconstrained Reward 
}\label{sec:DistributedDecisionMaking}
{In this section, we extend the UCB algorithm~\cite{PA-NCB-PF:02}, for single-agent decision making among arms, to design decision making in the distributed cooperative setting in which a group of $M$ agents communicate with one another over a network with fixed graph. At every time $t$, each agent $k$ updates its estimates of the mean rewards at each arm $i$ according to the cooperative estimation algorithm of Section~\ref{sec:coop-est}. Then each agent chooses an arm to maximize its own individual reward.  We consider the case of unconstrained sub-Gaussian rewards here and the case of constrained sub-Gaussian rewards in Section~\ref{sec:Coop-ucb2-collisions}.  

Intuitively, each agent will perform better with communication than without.  However, the extent of the performance advantage of each agent and  the group as whole, as a result of communication, depends on the network structure. We compute bounds on group performance by computing bounds on the expected group cumulative regret, and we show how the bounds depend on graph explore-exploit index $\epsilon_n$ and nodal explore-exploit centrality indices $\epsilon_c^k$, $k = 1, \ldots, M$.}

\subsection{The coop-UCB2 Algorithm}
The coop-UCB2 algorithm is initialized by each agent sampling each arm once and proceeds as follows (see \ref{app:pseudocode} for pseudocode implementation). At time $t$ each agent $k$ selects the arm with maximum $Q_i^{k}(t-1) = \hat{\mu}_i^{k}(t-1) + C_i^k(t-1)$, where
\begin{equation}
C_i^k(t-1) =  \sigma_g \; \sqrt[]{\frac{2\gamma}{G(\eta)} \cdot \frac{\hat{n}_i^{k}(t-1) +  f(t-1)}{M\hat{n}_i^{k}(t-1)}\cdot\frac{ \lnn{t-1}}{\hat{n}_i^{k}(t-1)}} \label{C2defn-bounded}
\end{equation}
for sub-Gaussian rewards.  Here, $f(t)$ is an increasing sublogarthmic function of $t$, $\gamma>1$, $\eta \in (0,4)$, and $G(\eta) = 1-\slfrac{\eta^2}{16}$. 

Then, at each time $t$, each agent $k$ updates its cooperative estimate of the mean reward at each arm using the distributed cooperative estimation algorithm described in~(\ref{shatdefn})-(\ref{eqnmean}). Note that the heuristic $Q_i^k$ requires agent $k$ to know the total number of agents $M$ but {nothing about the} 
graph structure.


\begin{theorem}[\bit{Upper Bound on Suboptimal Selections for coop-UCB2 Algorithm}]\label{thm:regret-coop-ucb2}
For the coop-UCB2 algorithm and the distributed cooperative multi-agent MAB problem under the unconstrained reward model with sub-Gaussian rewards, the number of times a suboptimal arm $i$ is selected by all agents until time $T$ satisfies
\[
	\sum_{k=1}^M\mathbb{E}[n_i^{k}(T)]  \leq \frac{4 \sigma_g^2 \gamma \ln T}{\Delta_i^2 G(\eta)} \left( 1 + \sqrt{1 + \frac{\Delta_i^2 M G(\eta)}{2\gamma \sigma_s^2} \frac{f(T)}{\ln T}} \right) +L, 
\]
where 
\begin{align}
& L(\epsilon_n, \epsilon_c^1, \ldots, \epsilon_c^M) =  \sum_{k=1}^M  (t_k^\dagger -1) + M(1+\epsilon_n) +1  \nonumber \\
& \quad + \frac{2M}{\lnn{1 +\eta}}  \bigg( \frac{1}{(\gamma - 1)^2}  + {\frac{\gamma \lnn{1 + \epsilon_n)(1 + \eta)}}{\gamma - 1} +1}\bigg),
\label{eq:L}
\end{align}
is a constant independent of $T$ and $t^\dagger_k= f^{-1}(\epsilon_c^k)$. 
\end{theorem}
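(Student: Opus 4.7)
The plan is to mimic the standard UCB proof scheme, where an agent $k$ picks suboptimal arm $i$ at time $t$ only if $Q_i^k(t-1) \ge Q_{i^*}^k(t-1)$, and then to replace the usual Chernoff--Hoeffding deviation bound by the distributed concentration inequality of Theorem~\ref{thm:EstDevBoudnsCondensed}. First I would rearrange $\hat{\mu}_i^k + C_i^k \ge \hat{\mu}_{i^*}^k + C_{i^*}^k$ into the disjunction of three ``failure'' events: (E1) $\hat{\mu}_{i^*}^k(t-1) \le m_{i^*} - C_{i^*}^k(t-1)$, (E2) $\hat{\mu}_i^k(t-1) \ge m_i + C_i^k(t-1)$, and (E3) $\Delta_i < 2 C_i^k(t-1)$. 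The form of $C_i^k(t-1)$ in \eqref{C2defn-bounded} is precisely calibrated so that multiplying through by $\hat{n}_i^k(t-1)$ turns the tail condition in (E1) and (E2) into the ratio appearing in Theorem~\ref{thm:EstDevBoudnsCondensed}, provided that $f(t-1) \ge \epsilon_c^k$; this condition is what produces the threshold $t_k^\dagger = f^{-1}(\epsilon_c^k)$ in the constant $L$.

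Next, for each agent $k$ and each $t \ge t_k^\dagger$, I would apply Theorem~\ref{thm:EstDevBoudnsCondensed} with $\delta = \sigma_g\sqrt{2\gamma \ln(t-1)/G(\eta)}$ to bound $\mathbb{P}(\text{E1})$ and $\mathbb{P}(\text{E2})$ by the peeling-style expression $\lceil \ln(t-1+\epsilon_n)/\ln(1+\eta)\rceil \cdot (t-1)^{-\gamma}$. Summing these probabilities over $t \in \{t_k^\dagger,\dots,T\}$ and then over $k$ yields the second line of $L$ in \eqref{eq:L}: the $1/(\gamma-1)^2$ term arises from $\sum_t \ln t \cdot t^{-\gamma}$, and the $\gamma\ln((1+\epsilon_n)(1+\eta))/(\gamma-1)$ term from the additive shift inside the ceiling together with $\sum_t t^{-\gamma}$.

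The dominant, logarithmic, contribution comes from event (E3). Solving $\Delta_i \ge 2 C_i^k(t-1)$ for $\hat{n}_i^k(t-1)$ gives a quadratic inequality in $\hat{n}_i^k$ whose admissible region is $\hat{n}_i^k(t-1) \ge \tfrac{2\sigma_g^2 \gamma \ln T}{\Delta_i^2 G(\eta)}\bigl(1 + \sqrt{1 + \tfrac{\Delta_i^2 M G(\eta)}{2\gamma\sigma_g^2}\tfrac{f(T)}{\ln T}}\bigr)$; any time (E3) is triggered, $\hat{n}_i^k(t-1)$ lies below this threshold. Using part (i) of Proposition~\ref{prop:coop-est} to convert the per-agent estimate $\hat{n}_i^k$ into $n_i^{\text{cent}}$ (and hence $\tfrac{1}{M}\sum_k n_i^k$) up to the additive slack $\epsilon_n$, I would sum over agents to obtain the leading logarithmic term in the theorem, with the additive $M(1+\epsilon_n)$ absorbed into $L$.

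The main obstacle I expect is the bookkeeping around the ``$f(t-1) \ge \epsilon_c^k$'' regime: the concentration bound in Theorem~\ref{thm:EstDevBoudnsCondensed} is only applicable once the inflation term $f(t)$ dominates $\epsilon_c^k$, so I need to argue separately that the pre-$t_k^\dagger$ contribution is deterministically bounded by $t_k^\dagger - 1$ per agent (this accounts for the $\sum_k (t_k^\dagger-1)$ summand). A secondary delicate point is carefully aligning the $\sqrt{(\hat{n}_i^k+\epsilon_c^k)/M}$ normalization in Theorem~\ref{thm:EstDevBoudnsCondensed} with the particular grouping of factors inside $C_i^k(t-1)$, so that the final deviation threshold is precisely $\delta$ chosen above and no extra polylogarithmic losses are incurred in the peeling argument.
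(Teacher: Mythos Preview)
Your proposal is correct and follows essentially the same route as the paper: the three-event decomposition (E1)--(E3), the use of Theorem~\ref{thm:EstDevBoudnsCondensed} with $\delta=\sigma_g\sqrt{2\gamma\ln(t-1)/G(\eta)}$ once $t\ge t_k^\dagger$, the quadratic in $\hat n_i^k$ coming from (E3), and the conversion $\hat n_i^k \to n_i^{\text{cent}}$ via Proposition~\ref{prop:coop-est}(i) with slack $\epsilon_n$ are exactly the paper's ingredients. The only point to make precise is the counting step for (E3): the paper introduces a single threshold $A=\lceil M\epsilon_n + \text{(logarithmic term)}\rceil$ on the \emph{total} count $M n_i^{\text{cent}}$, so that across all agents the number of pulls of arm $i$ made while $Mn_i^{\text{cent}}<A$ is at most $A$ (not $M$ times a per-agent threshold), which is what yields the single leading logarithmic term plus the $M(1+\epsilon_n)+1$ in $L$.
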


\begin{proof}
The upper bound is computed 
as for UCB1~\cite{PA-NCB-PF:02} and leverages Proposition~\ref{prop:coop-est} and Theorem~\ref{thm:EstDevBoudnsCondensed}.  
See~\ref{app:proof-thm2}.  
\end{proof}

\begin{corollary}[\bit{Regret of the coop-UCB2 Algorithm}]\label{corr:regret-coop-ucb2}
For the coop-UCB2 algorithm and the distributed cooperative multi-agent MAB problem under the unconstrained reward model with sub-Gaussian rewards, the expected cumulative group regret until time $T$ satisfies
\[
\supscr{R}{unc}(T)	\leq \sum_{i=1}^N \frac{4 \sigma_g^2 \gamma \ln T}{\Delta_i G(\eta)} \left( 1 + \sqrt{1 + \frac{\Delta_i^2 M G(\eta)}{2\gamma \sigma_s^2} \frac{f(T)}{\ln T}} \right) + \sum_{i=1}^N L \Delta_i.
\]
\end{corollary}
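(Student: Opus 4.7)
The plan is to obtain this corollary as an essentially immediate consequence of Theorem~\ref{thm:regret-coop-ucb2} by plugging the per-arm suboptimal-selection bound into the definition of the expected cumulative group regret under the unconstrained reward model.

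First, I would recall the identity \eqref{Runc}, namely
\[
\supscr{R}{unc}_T = \sum_{i=1}^N \Delta_i \sum_{k=1}^M \expt[n_i^k(T)],
\]
and note that $\Delta_{i^*}=0$ so that the optimal arm contributes nothing; hence the outer sum may be restricted to suboptimal arms $i \neq i^*$ without loss.

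Second, for each suboptimal arm $i$, I would substitute the bound on $\sum_{k=1}^M \expt[n_i^k(T)]$ guaranteed by Theorem~\ref{thm:regret-coop-ucb2} and then multiply through by $\Delta_i$. The leading $\Delta_i^{-2}$ factor inside the bracket combines with the outer $\Delta_i$ to produce the $\Delta_i^{-1}$ that appears in the stated expression, while the constant term $L$ (which is independent of $T$ and of $i$) picks up a factor $\Delta_i$. Finally, since the $i=i^*$ terms vanish identically, I would re-index the sums back to run over all $i \in \{1,\dots,N\}$ to match the statement of the corollary.

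There is essentially no obstacle: this is a bookkeeping step that repackages Theorem~\ref{thm:regret-coop-ucb2} through the linearity of the regret expression in $\sum_k \expt[n_i^k(T)]$. The only thing one should be careful about is preserving the $L = L(\epsilon_n,\epsilon_c^1,\dots,\epsilon_c^M)$ dependence from \eqref{eq:L} faithfully inside the final $\sum_{i=1}^N L\,\Delta_i$ term, and observing that including $i=i^*$ in this residual sum is harmless since $\Delta_{i^*}=0$.
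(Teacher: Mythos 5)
Your proposal is correct and matches the paper's proof, which likewise obtains the corollary by substituting the bound on $\sum_{k=1}^M \expt[n_i^k(T)]$ from Theorem~\ref{thm:regret-coop-ucb2} into the regret decomposition \eqref{Runc}. The remarks about $\Delta_{i^*}=0$ and the $\Delta_i$ bookkeeping are exactly the (implicit) steps in the paper's one-line argument.
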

\begin{proof}
    The corollary follows by substituting the upper bound on $\sum_{k=1}^M \expt[n_i^k(T)]$ from Theorem~\ref{thm:regret-coop-ucb2} into  \eqref{Runc}. 
\end{proof}

From these bounds, we can compare performance for the distributed case relative to the centralized case,  
and we can draw conclusions about the predictive value of explore-exploit indices $\epsilon_n$ and $\epsilon_c^k$ as follows. 

\begin{remark}[\bit{Asymptotic Regret for coop-UCB2}]
	In the limit $t\rightarrow + \infty$, $\frac{f(t)}{\ln(t)} \to 0^+$, $\eta \rightarrow 0$, and   
	\begin{equation*}
	\sum_{k=1}^M \mathbb{E}[n_i^k(T)] \leq  \left( \frac{8 \sigma_g^2  \gamma}{\Delta_i^2} + o(1) \right)\ln T.   
	\end{equation*}
	We thus recover the upper bound on regret for a {centralized fusion center} 
	as given in \eqref{eqn:fusioncenterregret} within a constant factor.
	\oprocend
\end{remark}

\begin{remark}[\bit{{Predicting Relative Performance from Network Graph Topology}}] \label{Remark:Indiv}
	Theorem \ref{thm:regret-coop-ucb2} and Corollary~\ref{corr:regret-coop-ucb2} provide bounds on the performance of the group as a function of the graph structure, {as measured by the group explore-exploit index $\epsilon_n$ and nodal explore-exploit centrality indices $\epsilon_c^k$. While the logarithmic term in the upper bound on group performance is independent of graph structure, the sublogarithmic term $L$, given in \eqref{eq:L}, depends on $\epsilon_n$ and $\epsilon_c^k$. Our theory predicts that the performance of a group is better for a network with smaller $\epsilon_n$, since a smaller $\epsilon_n$ implies a smaller upper bound on expected cumulative group regret.  Likewise, our theory predicts that the performance of individual agent $j$ is better than the performance of individual agent $l$ if $\epsilon_c^j < \epsilon_c^l$, since a smaller $\epsilon_c^k$ implies a smaller contribution from agent $k$ to the upper bound on expected cumulative group regret. These predictions rely on the bounds being sufficiently tight; we illustrate the usefulness of the predictions with simulations in Section~\ref{NetworkPerformanceAnalysis}}.
	\oprocend
\end{remark}

\section{Cooperative Decision Making: Constrained Reward
}\label{sec:Coop-ucb2-collisions}
In this section we extend our analyses in Section \ref{sec:DistributedDecisionMaking} to the case of the constrained reward model\footnote{Some authors  ~\cite{anandkumar2011distributed,kalathil2014decentralized} have considered the case where agents that sample the same arm at the same time receive a split reward.  The algorithm presented here is still appropriate for that scenario, and the regret as defined above will upper bound the regret in the case of split rewards.}.  In this setting the optimal solution in terms of group regret is for the $M$ agents to each sample a different arm from among the $M$-best arms at every time $t$.  Recall that $\mathcal{O}_{k}^*$ is the set of $k$-best arms. Let $\Delta_{\min} = \min \setdef{|m_i-m_j|}{i, j \in \until{N}, i \ne j}$. In the following, we assume that each agent $k$  has a preassigned unique rank $\omega^k \in \until{M}$ and will attempt to sample the arm with the $\omega^k$-th best reward.  Without loss of generality, we assume that $\omega^k =k$. We  define agent $k^i$ {as the index of} the agent attempting to sample arm $i \in \mc O^*_M$.  We let $k^i = 0$ if $i \notin \mc O^*_M$. 
Therefore, the expected cumulative regret of agent $k$ at time $T$ is
\begin{equation} \label{eqn:RegretDefnCol}
	R^k(T) = \sum_{t=1}^T \left [  m_{b^k}- \mathbb{E} \left [ \sum_{i=1}^N r_i^k(t) \mathds{1}\{i^k(t) = i\} \mathbb{I}_{i}^k(t) \right ] \right ],
\end{equation}
where $\mathbb{I}_{i}^k(t) = 1$ if agent $k$ is the only agent to sample arm $i$ at time $t$, and $0$ otherwise.  

In the following, we assume that while agents do not receive any reward if they sample the same arm, they still have access to the {value of the} reward they did not receive and they can use it in updating their estimates of the mean rewards.

\subsection{The coop-UCB2-selective-learning Algorithm}
In this section, we present the coop-UCB2-selective-learning algorithm in which agent $k$ selectively targets the $k$-th best arm (see \ref{app:pseudocode2} for pseudocode implementation).  The coop-UCB2-selective-learning algorithm for sub-Gaussian rewards is initialized by each agent sampling each arm once in a round-robin fashion with agent $k$ begining the sampling with the $k$-th arm. At each time $t$, each agent $k$ updates its cooperative estimate of the mean reward at each arm using the distributed cooperative estimation algorithm described in~\eqref{shatdefn}--\eqref{eqnmean}. 

 Subsequently, at time $t$, each agent $k$ estimates $\mc O_k^*$ by constructing the set $\mathcal{O}_{k}(t)$ containing $k$ arms associated with the indices of the $k$ highest values in the set $\setdef{Q_i^{k}(t-1) = \hat{\mu}_i^{k}(t-1) + C_i^k(t-1)}{i \in \until{N}}$, where
\begin{equation}
C_i^k(t-1) =  \sigma_g \;\sqrt[]{\frac{2\gamma}{G(\eta)} \cdot \frac{\hat{n}_i^{k}(t-1) +  f(t-1)}{M\hat{n}_i^{k}(t-1)}\cdot\frac{ \lnn{t-1}}{\hat{n}_i^{k}(t-1)}}, \label{C2defn-bounded-col}
\end{equation}
 $f(t)$ is an increasing sublogarthmic function of $t$, $\gamma>1$, $\eta \in (0,4)$, and $G(\eta) = 1-\slfrac{\eta^2}{16}$. 

Each agent $k$ then selects the arm associated  with the minimum value in the set $\setdef{W_i^{k}(t-1) = \hat{\mu}_i^{k}(t-1) - C_i^k(t-1)}{i \in \mc O_k}$. 

Our algorithm generalizes the selective-learning algorithm for multi-agent MABs with no communication among agents proposed in~\cite{gai2014distributed} to the case of communicating agents.

\subsection{Analysis of the coop-UCB2-selective-learning Algorithm}
We first bound the number of times an arm $i$ is incorrectly selected. We call the selection of arm $i \in \mc O_M^*$ incorrect if it is selected by an agent $k \ne k^i$. Any selection of arm  $i \notin \mc O_M^*$ is incorrect. Let $\bar n_i^k(t)$ be the number of incorrect selections of arm $i$ until time $t$. 

\begin{theorem}[\bit{Upper Bound on Incorrect Selections for coop-UCB2-selective-learning Algorithm}]\label{thm:incorrect-selections}
For the coop-UCB2-selective-learning algorithm and the distributed cooperative multi-agent MAB problem under the constrained reward model with sub-Gaussian rewards, the number of times an arm $i$ is incorrectly selected by all agent until time $T$ satisfies
		\begin{equation*}
	\sum_{k \neq k^i}  \mathbb{E} \left[ \bar  n_i^k(T)  \right] \leq   \frac{4 \sigma_g^2 \gamma  }{\Delta_{\min}^2 G(\eta)}  \left( 1 + \sqrt{1  +  \frac{\Delta_{\min}^2 MG(\eta)}{2 \sigma_g^2 \gamma} \frac{f(T)}{\ln T}} \right)  \ln T + \bar L,
	\end{equation*}
	where
	\begin{align}
	    & \bar L(\epsilon_n, \epsilon_c^1, \ldots, \epsilon_c^M) = \sum_{k=1}^M (t_k^\dag -1)+  M (1+\epsilon_n) + 1
	     \nonumber \\
	    & \quad + \frac{2M(N+1)}{\lnn{1\!+\!\eta}}   \left( \frac{1}{(\gamma - 1)^2} +  {\frac{\gamma \lnn{1 + \epsilon_n)(1 + \eta)}}{\gamma - 1} +1} \right) 
	    \label{eq:Lbar}
	\end{align}
is a constant independent of $T$ and $t^\dagger_k= f^{-1}(\epsilon_c^k)$. 	
%
%
\end{theorem}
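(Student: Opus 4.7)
The plan is to mirror the structure of the proof of Theorem~\ref{thm:regret-coop-ucb2} while carefully accounting for the two-stage structure of coop-UCB2-selective-learning (first selecting the high-UCB set $\mc O_k(t)$, then picking the minimum-LCB arm in this set) and for the fact that agent $k$ must single out the arm $b^k$ among all $N$ arms rather than just the unique best arm.
First, I would reduce the event of incorrect selection to a collection of confidence-interval failure events. If agent $k$ selects an arm $i \ne b^k$ at time $t$, then at least one of the following must hold: either (a) $i \in \mc O_k(t)$ while some $b^j \in \mc O_k^*$ is not, forcing $Q_i^k(t-1) \ge Q_{b^j}^k(t-1)$ with $m_{b^j} - m_i \ge \Delta_{\min}$; or (b) $\mc O_k(t)=\mc O_k^*$ but the minimum-LCB arm in $\mc O_k(t)$ is some $b^j$ with $j \ne k$, forcing $W_{b^j}^k(t-1) \le W_{b^k}^k(t-1)$ with $|m_{b^k} - m_{b^j}| \ge \Delta_{\min}$. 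In either case there is a pair of arms $(p,q)$ with $m_p - m_q \ge \Delta_{\min}$ for which the UCB/LCB ordering computed from the cooperative estimates contradicts the true ordering of the means.

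Second, for each such pair, I would show by the standard algebraic argument used for UCB1 that the contradiction implies at least one of: $\hat{\mu}_q^k(t-1) \ge m_q + C_q^k(t-1)$, $\hat{\mu}_p^k(t-1) \le m_p - C_p^k(t-1)$, or the effective sample count $\hat n_i^k(t-1)$ is below a threshold depending on $\Delta_{\min}$. The threshold, chosen exactly as in the proof of Theorem~\ref{thm:regret-coop-ucb2} but with $\Delta_i$ replaced by $\Delta_{\min}$, yields the leading logarithmic term
\[
\frac{4 \sigma_g^2 \gamma}{\Delta_{\min}^2 G(\eta)}\!\left(1 + \sqrt{1 + \frac{\Delta_{\min}^2 M G(\eta)}{2 \sigma_g^2 \gamma}\frac{f(T)}{\ln T}}\right)\!\ln T.
\]
For the two concentration events I would invoke Theorem~\ref{thm:EstDevBoudnsCondensed} with parameter $\delta$ tuned so the bound is $t^{-\gamma}$, combined with the same peeling argument over geometric time windows of ratio $(1+\eta)$ used in \ref{app:proof-thm-1}. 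This reproduces, per pair $(p,q)$, a sublogarithmic contribution of the form $\frac{2}{\ln(1+\eta)}\big(\tfrac{1}{(\gamma-1)^2} + \tfrac{\gamma \ln((1+\epsilon_n)(1+\eta))}{\gamma-1} + 1\big)$.

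Third, I would sum these contributions. Because agent $k$ must now compare arm $i$ against each of the arms in $\mc O_M^*$ (rather than only against the unique best arm $i^*$), a union bound inflates the per-agent sublogarithmic constant by a factor of $N+1$, accounting for $N$ arm comparisons plus the self-comparison at the boundary of $\mc O_k(t)$. Summing the resulting per-agent bounds over $k \in \until{M}$ and adding the $M(1+\epsilon_n)+1$ term arising from the startup phase of the running-consensus estimator (via part (i) of Proposition~\ref{prop:coop-est}) and the $\sum_k (t_k^\dagger - 1)$ term from the sample-count threshold inversion, as in Theorem~\ref{thm:regret-coop-ucb2}, yields exactly the expression for $\bar L$ in \eqref{eq:Lbar}. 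The main obstacle I expect is in the event-reduction step: because the algorithm's decision depends on both a rank-based UCB filter and an LCB tiebreaker, one must argue uniformly over all pairs $(p,q)$ with $m_p > m_q$ that a misorder in either of the two filters reduces cleanly to a concentration failure with gap $\Delta_{\min}$. This is more delicate than in the unconstrained case, where only comparisons against $i^*$ matter; however, once the reduction is made, the remaining machinery (peeling and Theorem~\ref{thm:EstDevBoudnsCondensed}) transfers essentially verbatim from the proof of Theorem~\ref{thm:regret-coop-ucb2}.
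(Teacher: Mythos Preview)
Your high-level plan is right and the machinery you invoke (Theorem~\ref{thm:EstDevBoudnsCondensed}, the peeling argument, and the quadratic threshold inversion with $\Delta_{\min}$ in place of $\Delta_i$) is exactly what the paper uses. However, your event reduction in case (a) has a genuine gap. You claim that whenever $\mc O_k(t)\ne\mc O_k^*$ there is some $b^j\in\mc O_k^*\setminus\mc O_k(t)$ with $Q_i^k\ge Q_{b^j}^k$ \emph{and} $m_{b^j}-m_i\ge\Delta_{\min}$. The inequality $Q_i^k\ge Q_{b^j}^k$ holds (since $i\in\mc O_k(t)$ and $b^j\notin\mc O_k(t)$), but the sign of the gap is not guaranteed: if $i\in\mc O_k^*\setminus\{b^k\}$ (so $m_i>m_{b^k}$), the missing arm $b^j$ can satisfy $m_{b^j}<m_i$, and then your pair $(i,b^j)$ yields no usable concentration statement. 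Your cases (a)/(b) therefore do not exhaust the event $\{i^k(t)=i,\,i\ne b^k\}$.

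The paper closes this gap by splitting first on $m_i<m_{b^k}$ versus $m_i\ge m_{b^k}$. In the first case your argument is correct. In the second case (so $i\in\mc O_k^*$), the paper uses the \emph{LCB} step rather than the UCB filter: since $i$ is the minimum-$W$ arm in $\mc O_k(t)$, one has $W_i^k\le W_j^k$ for every $j\in\mc O_k(t)$. If $\mc O_k(t)=\mc O_k^*$ take $j=b^k$; if $\mc O_k(t)\ne\mc O_k^*$ take any $j\in\mc O_k(t)\setminus\mc O_k^*$. In both sub-cases $m_i>m_j$, so the three-way split applies with third condition $m_i<m_j+2C_i^k(t)$, which depends on $\hat n_i^k$ and is therefore killed by the threshold $Mn_i^{\text{cent}}\ge A$. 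The resulting union bounds over $j\in\mc O_k^*$ (first case) and $j\notin\mc O_k^*\setminus\{b^k\}$ (second case) give counts $\sum_k k$ and $\sum_k(N-k+1)$ whose sum, after the factor $2$ for the two concentration events, is exactly $2M(N+1)$; this is the precise origin of the $N+1$ in $\bar L$, rather than the heuristic you sketch.
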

\begin{proof}
The upper bound is computed similarly to $\mathrm{SL}(K)$~\cite{gai2014distributed}, leveraging Proposition~\ref{prop:coop-est} and Theorem~\ref{thm:EstDevBoudnsCondensed}.  See~\ref{app:proof-thm3}.  
\end{proof}

\begin{corollary}[\bit{Regret of the coop-UCB2-selective-learning Algorithm}] \label{thm:RegretCollisions}
For the coop-UCB2-selective-learning algorithm and the distributed cooperative multi-agent MAB problem under the constrained reward model with sub-Gaussian rewards, the expected cumulative regret of the group satisfies
	\begin{equation}
	\supscr{R}{con}(T) \le 	\sum_{k=1}^M \! R^k(T)  \leq  m_{i^*} N B + \sum_{k=1}^M  m_{b^k}  B \nonumber
	\end{equation}
		where 
	\begin{align}
		&B  \! =\! \frac{4 \sigma_g^2 \gamma  }{\Delta_{\min}^2 G(\eta)}  \left( 1 + \sqrt{1  +  \frac{\Delta_{\min}^2 MG(\eta)}{2 \sigma_g^2 \gamma} \frac{f(T)}{\ln T}} \right)  \ln T + \bar L.
	\end{align}
\end{corollary}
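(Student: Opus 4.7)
The plan is to express the group regret as a sum of per-agent losses, decompose each agent's loss into a wrong-arm piece (whose aggregate across agents will be bounded by $NB$) and a colliding-on-target piece (whose expectation per agent will be bounded by $B$), and then invoke Theorem~\ref{thm:incorrect-selections} to control both pieces.

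The first step is to verify the identity $\supscr{R}{con}(T) = \sum_{k=1}^M R^k(T)$. Taking expectation in~\eqref{Rcon} and using that, conditionally on $i^k(t)$, the reward $r_{i^k(t)}^k(t)$ is independent of the collision indicator $\mathbb{I}^k_{i^k(t)}(t)$ lets me replace the realized reward by its mean and match~\eqref{eqn:RegretDefnCol}. The second step decomposes, for each agent $k$, the per-step loss into four cases according to whether $i^k(t)=b^k$ and whether a collision occurs: the loss is $0$ when $k$ plays $b^k$ alone, $m_{b^k}$ when $k$ plays $b^k$ and collides, $m_{b^k}-m_i$ when $k$ plays $i\neq b^k$ alone, and $m_{b^k}$ when $k$ plays $i\neq b^k$ and collides. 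Bounding the loss in each incorrect-play case by $m_{i^*}$ (using $m_{b^k}-m_i\le m_{b^k}\le m_{i^*}$ under the implicit non-negativity of the means) and retaining the $m_{b^k}$ factor on the remaining collision-on-$b^k$ term produces
\[
R^k(T)\;\le\; m_{b^k}\,\mathbb{E}\!\left[c_k^{b^k}(T)\right] \;+\; m_{i^*}\!\sum_{i\neq b^k}\!\mathbb{E}\!\left[n_i^k(T)\right],
\]
where $c_k^{b^k}(T)$ counts the times agent $k$ plays $b^k$ but is not alone on it.

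The third step applies Theorem~\ref{thm:incorrect-selections} twice. Since $\bar n_i^k(T)=n_i^k(T)$ whenever $k\neq k^i$, summing the second term over $k$ produces $\sum_{i=1}^N\sum_{k\neq k^i}\mathbb{E}[\bar n_i^k(T)]\le NB$ by $N$ applications of the theorem. For the collision term, a collision on $b^k$ forces some other agent $j\neq k = k^{b^k}$ to play $b^k$ at that instant, so $c_k^{b^k}(T)\le \sum_{j\neq k^{b^k}}\bar n_{b^k}^j(T)$, and Theorem~\ref{thm:incorrect-selections} applied to arm $b^k$ yields $\mathbb{E}[c_k^{b^k}(T)]\le B$. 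Summing over $k$ and combining gives the claimed bound $\supscr{R}{con}(T)\le m_{i^*}NB + \sum_{k=1}^M m_{b^k}B$.

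The main obstacle is the collision bookkeeping. Theorem~\ref{thm:incorrect-selections} only controls selections of an arm by agents that do not target it, so each collision on $b^k$ must be charged to some other agent's incorrect selection rather than requiring a separate concentration argument; the inequality $c_k^{b^k}(T)\le \sum_{j\neq k^{b^k}}\bar n_{b^k}^j(T)$ is the key accounting step that makes this possible. A secondary subtlety is the implicit reliance on non-negative arm means when upper-bounding the per-play loss by $m_{i^*}$; this holds for the usual bounded-support sub-Gaussian examples but would otherwise require an \emph{a priori} shift of the rewards.
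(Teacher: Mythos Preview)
Your proposal is correct and follows essentially the same route as the paper's proof. The paper also splits each agent's regret into a wrong-arm part and a collision-on-target part, bounding the former by $m_{b^k}\sum_{i\neq b^k}\mathbb{E}[\bar n_i^k(T)]$ (then relaxing $m_{b^k}\le m_{i^*}$ and reindexing to get $m_{i^*}\sum_{i}\sum_{k\neq k^i}\mathbb{E}[\bar n_i^k(T)]\le m_{i^*}NB$) and the latter by $m_{b^k}\sum_{j\neq k}\mathbb{E}[\bar n_{b^k}^j(T)]\le m_{b^k}B$, invoking Theorem~\ref{thm:incorrect-selections} for each. Your four-case breakdown and the explicit accounting step $c_k^{b^k}(T)\le\sum_{j\neq k^{b^k}}\bar n_{b^k}^j(T)$ make the same argument with a bit more granularity; your observation about the implicit non-negativity of the means is a genuine subtlety that the paper leaves unstated but also relies on.
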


\begin{proof}
	As in \cite{gai2014distributed}, agent $k$ incurs regret either by selecting an arm $i \neq b^k$ or when another user $j\neq k$ selects arm $b^k$.  Therefore, 
	\begin{align}
		\sum_{k=1}^M \! R^k(T)  & 
		 \leq \! \sum_{k=1}^M \sum_{i \neq b^k} \! \mathbb{E} \left [ \bar n_i^k(T) \right ] \! m_{b^k} + \sum_{k=1}^M \sum_{j \neq k} \mathbb{E} \! \left [ \bar n_{b^k}^j(T) \right ] \! m_{b^k} \nonumber \\
		& \leq \! m_{i^*} \sum_{i=1}^N \sum_{k \neq k^i} \! \mathbb{E} \left [ \bar n_i^k(T) \right ]  + \sum_{k=1}^M \sum_{j \neq k} \mathbb{E} \! \left [ \bar n_{b^k}^j(T) \right ] \! m_{b^k} \nonumber \\
		& \leq \! m_{i^*} \sum_{i=1}^N B + \sum_{k=1}^M  m_{b^k}  B, \nonumber
	\end{align}
	completing the proof.
\end{proof}

From these bounds, we can compare performance in the case of communication between agents relative to the case of no communication between agents, and we can draw conclusions about the predictive value of explore-exploit indices $\epsilon_n$ and $\epsilon_c^k$ for the unconstrained reward model, as follows.

\begin{remark}[\bit{Concise Upper Bound on Regret}] \label{Remark:RegretCollisionsConcise}
	The upper bound on expected cumulative group regret in Corollary \ref{thm:RegretCollisions} can be expressed concisely, at the expense of some tightness, as
	\begin{equation}
		\sum_{k=1}^M \!  R^k(T)   \leq  m_{i^*} B (M+N). \nonumber
	\end{equation}
	In the limit $\eta \to 0^+$ and $\gamma \to 1^+$, this is a factor of $4M$ tighter than the bounds in \cite{gai2014distributed}, demonstrating the benefits of communication between agents for the constrained reward model.
\end{remark}

\begin{remark}[\bit{{Predicting Relative Performance from Network Graph Topology for Constrained Reward Model}}] \label{Remark:Indiv_Constrained}
	Theorem~\ref{thm:incorrect-selections} and Corollary~\ref{thm:RegretCollisions} predict the performance of the group as a function of the graph structure for the constrained reward model just as described for the unconstrained reward model in Remark~\ref{Remark:Indiv}, since $\bar L$ given in \eqref{eq:Lbar} has the same form as $L$ given in \eqref{eq:L}.
	\oprocend
\end{remark}

\definecolor{Hue1}{rgb}{0.89, 0.1, 0.11}
\definecolor{Hue2}{rgb}{0.22, 0.55, 0.72}
\definecolor{Hue3}{rgb}{0.30, 0.69, 0.29}
\definecolor{Hue4}{rgb}{0.64, 0.31, 0.64}
\definecolor{Hue5}{rgb}{1, 0.50, 0}

\makeatletter
\tikzset{circle split part fill/.style args={#1,#2}{%
		alias=tmp@name, 
		postaction={%
			insert path={
				\pgfextra{%
					\pgfpointdiff{\pgfpointanchor{\pgf@node@name}{center}}%
					{\pgfpointanchor{\pgf@node@name}{east}}%
					\pgfmathsetmacro\insiderad{\pgf@x}
					\fill[#1] (\pgf@node@name.base) ([xshift=-\pgflinewidth]\pgf@node@name.east) arc
					(0:180:\insiderad-\pgflinewidth)--cycle;
					\fill[#2] (\pgf@node@name.base) ([xshift=\pgflinewidth]\pgf@node@name.west)  arc
					(180:360:\insiderad-\pgflinewidth)--cycle;            
				}}}}}
				\makeatother
				
				\def \minsize = {2cm}
				\tikzstyle{SimpleNetwork}=[draw,circle,minimum width=6pt]
				\tikzstyle{Leader_CALI} = [draw, diamond, fill = green, minimum size=0.55cm]
				\tikzstyle{Leader_FYL}  = [draw, diamond, fill = blue, minimum size=0.55cm]
				\tikzstyle{Leader_Both} = [draw, circle split, circle split part fill={green,blue}, minimum size=0.5cm]
				\tikzstyle{Follower} = [draw, circle, fill = white, minimum size=0.5cm]
				
				\tikzstyle{F_DandF_I^w} = [draw, circle split, circle split part fill={magenta,blue}, minimum size=0.5cm]
				\tikzstyle{F_DandF_D^w} = [draw, circle split, circle split part fill={magenta,cyan}, minimum size=0.5cm]
				\tikzstyle{F_IandF_I^w} = [draw, circle split, circle split part fill={white,blue}, minimum size=0.5cm]
				\tikzstyle{F_D} = [draw, regular polygon, regular polygon sides=3, fill = magenta, minimum size=0.5cm]
				\tikzstyle{F_D^w} = [draw, circle, fill = cyan, minimum size=0.5cm]
				\tikzstyle{F_I^w} = [draw, circle, fill = blue, minimum size=0.5cm]
				\tikzstyle{F_I} = [draw, circle, fill = cyan, minimum size=0.5cm]
				\tikzstyle{Generic} = [draw, circle, fill = white, minimum size=0.5cm]
				
				\tikzstyle{Leader} = [draw, diamond, fill = green, minimum size=0.55cm]
				\tikzstyle{Fdirect} = [draw, regular polygon, regular polygon sides=3, fill = magenta, minimum size=0.5cm]
				\tikzstyle{F} = [draw, circle, fill = cyan, minimum size=0.5cm]
				
				\tikzstyle{TestNode} = [draw, isosceles triangle, isosceles triangle apex angle=90, shape border rotate=90, fill = green, minimum size=0.25cm]
				\tikzstyle{TestNode2} = [draw, isosceles triangle, isosceles triangle apex angle=90, shape border rotate=-90, fill = blue, minimum size=0.25cm]
				
				\tikzstyle{Generic1} = [draw, circle, fill = Hue1, minimum size=0.5cm]
				\tikzstyle{Generic2} = [draw, circle, fill = Hue2, minimum size=0.5cm]
				\tikzstyle{Generic3} = [draw, circle, fill = Hue3, minimum size=0.5cm]
				\tikzstyle{Generic4} = [draw, circle, fill = Hue4, minimum size=0.5cm]
				\tikzstyle{Generic5} = [draw, circle, fill = Hue5, minimum size=0.5cm]

\section{Numerical Illustrations}
\label{NetworkPerformanceAnalysis}
\def \GraphWidth {.75\linewidth}

In this section, we illustrate our theoretical analyses from the previous sections 
with numerical examples. We first  provide examples in which the ordering of the performance of nodes obtained through numerical simulations is as predicted by the ordering of the nodal explore-exploit centrality indices, as discussed in Remarks~\ref{Remark:Indiv} and \ref{Remark:Indiv_Constrained}. That is, a smaller $\epsilon_c^k$ predicts better performance for agent $k$.  We then provide examples in which the ordering over networks of the performance of a group of agents is as predicted by the ordering over networks of the graph explore-exploit index, as discussed in Remark~\ref{Remark:Indiv} and \ref{Remark:Indiv_Constrained}. That is, a smaller $\epsilon_n$ predicts better performance for the group with the corresponding network graph. Our final example illustrates how performance improves with connectivity.

Unless otherwise noted in the simulations, we consider a $10$-arm bandit problem with mean rewards drawn from a normal random distribution for each Monte-Carlo run with mean $0$ and standard deviation $10$. The sampling standard deviation  is $\sigma_s = 30$ and the results displayed are the average of $10^6$ Monte-Carlo runs.  These parameters were selected to give illustrative results within the displayed time horizon, but the relevant conclusions hold across a wide range of parameter values. In the simulations $f(t)=\sqrt{\ln t}$, and consensus matrix $P$ is as in \eqref{Pdefn} with $\kappa = \frac{d_{\text{max}}}{d_{\text{max}}-1}$.


\begin{example}\label{ex:4agent_nodecompare}
	Figure \ref{fig:4agent_nodecompare} demonstrates the ordering of performance among agents using coop-UCB2 with the underlying graph structure in Table \ref{table:4agentgraph}.  The values of $\epsilon_c^k$ for each node are also given in Table \ref{table:4agentgraph}. As predicted by Theorem \ref{thm:regret-coop-ucb2} (Remark~\ref{Remark:Indiv}), agent $1$ should have the lowest regret, agents $2$ and $3$ should have equal and intermediate regret, and agent $4$ should have the highest regret as this is their ordering with respect to $\epsilon_c^k$. These predictions are validated in our simulations shown in Figure \ref{fig:4agent_nodecompare}.
\end{example}

\begin{figure}[ht!]
	\centering
	\begin{subfigure}{.5\linewidth}
		\centering
		\resizebox{0.9\linewidth}{!}{
			\begin{tikzpicture}[ every node /.style=minimum size=4em]
			
			\def \n {5}
			\def \offset = {30}
			\def \radius {1cm}
			\def \margin {8} 
			\tikzstyle{every node}=[font=\tiny]
			
			\node[Generic1] (1) at (0:0) {1};
			\node[Generic2] (2) at (270:\radius) {2};
			\node[Generic3] (4) at (330:\radius) {3};
			\node[Generic4] (3) [left of = 1]  {4};
			
			\draw (1) -- (2);
			\draw (1) -- (3);
			\draw (1) -- (4);
			\draw (2) -- (4);
			\end{tikzpicture}}
	\end{subfigure}%
	\begin{subfigure}{.5\linewidth}
		\begin{tabular}{c | c  | c}
			Agent & Degree  & $\epsilon_c^k$ \\ \hline
			1     & 3                & 0  \\ 
			2     & 2               & 2.31  \\ 
			3     & 2               & 2.31  \\ 
			4     & 1               & 5.41  
		\end{tabular}
	\end{subfigure}
	\captionof{table}{Fixed network used in Example \ref{ex:4agent_nodecompare}.}
	\label{table:4agentgraph}
\end{figure}



	


\addtocounter{figure}{-1}
\begin{figure}[ht!]
	\centering
	\includegraphics[width=\GraphWidth]{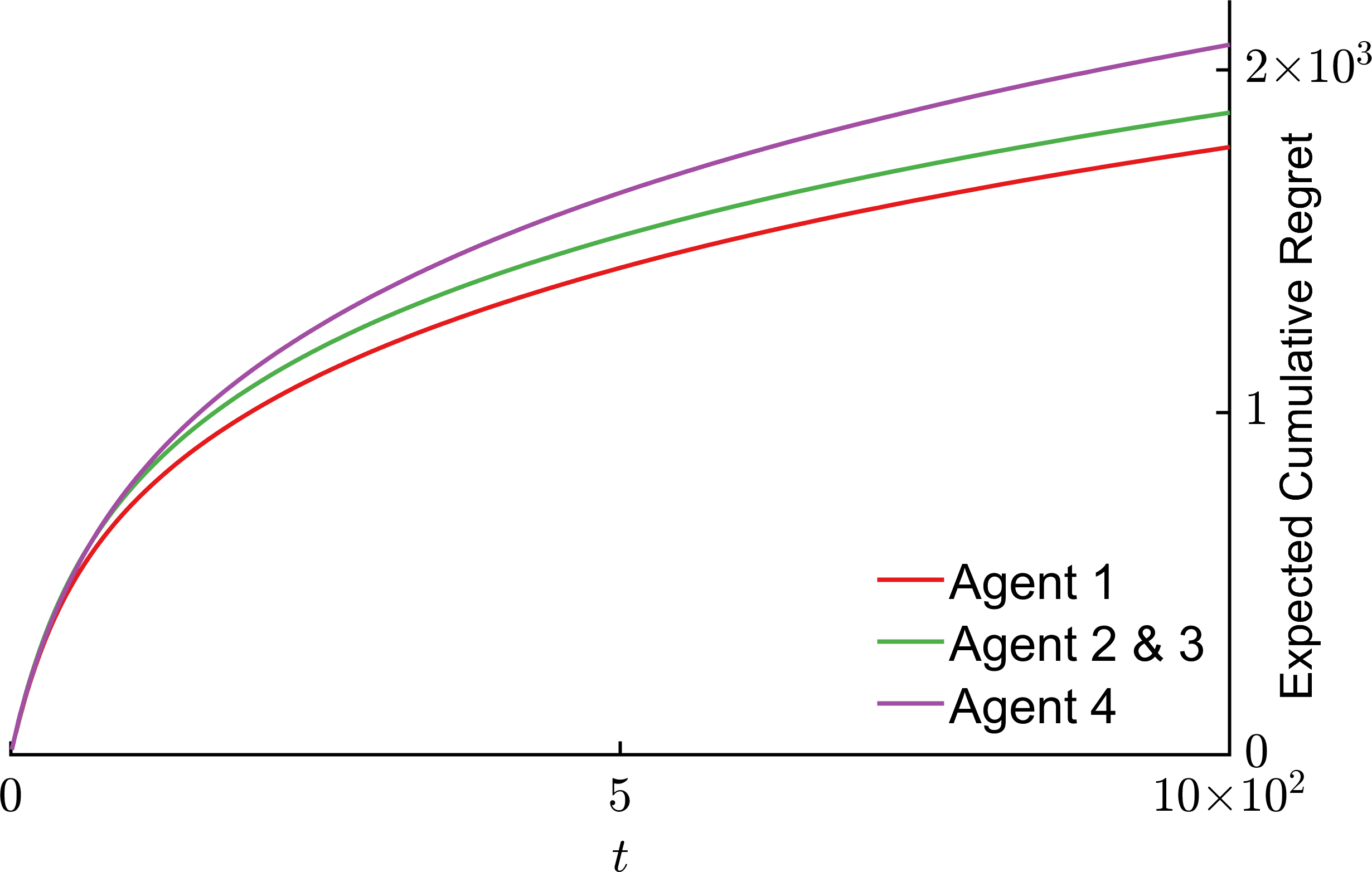}
	\caption[Simulation results comparing expected cumulative regret for agents in the fixed graph shown in Table \ref{table:4agentgraph}.]{Simulation results comparing expected cumulative regret for agents in the fixed network shown in Table \ref{table:4agentgraph}.  Agents $2$ and $3$, with the same centrality index, have nearly identical expected regret. {Agent 1, with lowest centrality index, performs best and agent 4, with highest centrality index, performs worst.}}
	\label{fig:4agent_nodecompare}
\end{figure}

\begin{example}\label{ex:house_nodecompare}
	Figure \ref{fig:house_nodecompare} demonstrates the ordering of performance among agents using coop-UCB2 with the underlying graph structure in Table \ref{table:5agentgraph}. Rewards are drawn from a normal distribution with mean $0$ and standard deviation $5$.  The values of $\epsilon_c^k$ for each node are also given in Table \ref{table:5agentgraph}, along with the values of degree and information centrality for each node \cite{Poulakakis2015}, for comparison.  Degree centrality for a node is defined as the number of neighbors.  Information centrality, defined in Stephenson and Zelen \cite{STEPHENSON19891}, is a nodal measure of the ``effective resistance" between the  node and  every other node in the network.

\begin{figure}[ht!]
	\centering
	\begin{subfigure}{.35\linewidth}
		\centering
		\resizebox{0.9\linewidth}{!}{
			\begin{tikzpicture}[ every node /.style=minimum size=4em]
			
			\def \n {5}
			\def \offset = {30}
			\def \radius {1cm}
			\def \margin {8} 
			\tikzstyle{every node}=[font=\tiny]
			
			\node[Generic1] (1) at (0:0) {1};
			\node[Generic2] (2) at (270:\radius) {2};
			\node[Generic5] (5) at (330:\radius) {5};
			\node[Generic3] (3) [left of = 1]  {3};
			\node[Generic4] (4) [below of = 3]  {4};
			
			\draw (1) -- (2);
			\draw (1) -- (3);
			\draw (1) -- (5);
			\draw (2) -- (5);
			\draw (2) -- (4);
			\draw (4) -- (3);
			\end{tikzpicture}}
	\end{subfigure}%
	\begin{subfigure}{.65\linewidth}
		\begin{tabular}{c | c | c | c}
			Agent & Degree & Info. Cent. & $\epsilon_c^k$ \\ \hline
			1     & 3      & .35           & 1.4  \\ 
			2     & 3      & .35          & 1.4  \\ 
			3     & 2      & .28          & 3.4  \\ 
			4     & 2      & .28           & 3.4  \\
			5     & 2      & .27           & 2.9 
		\end{tabular}
	\end{subfigure}
	\captionof{table}{Fixed network used in Example \ref{ex:house_nodecompare} and several centrality indices.}
	\label{table:5agentgraph}
\end{figure}

\addtocounter{figure}{-1}
\begin{figure}[ht!]
	\centering
	\includegraphics[width=\GraphWidth]{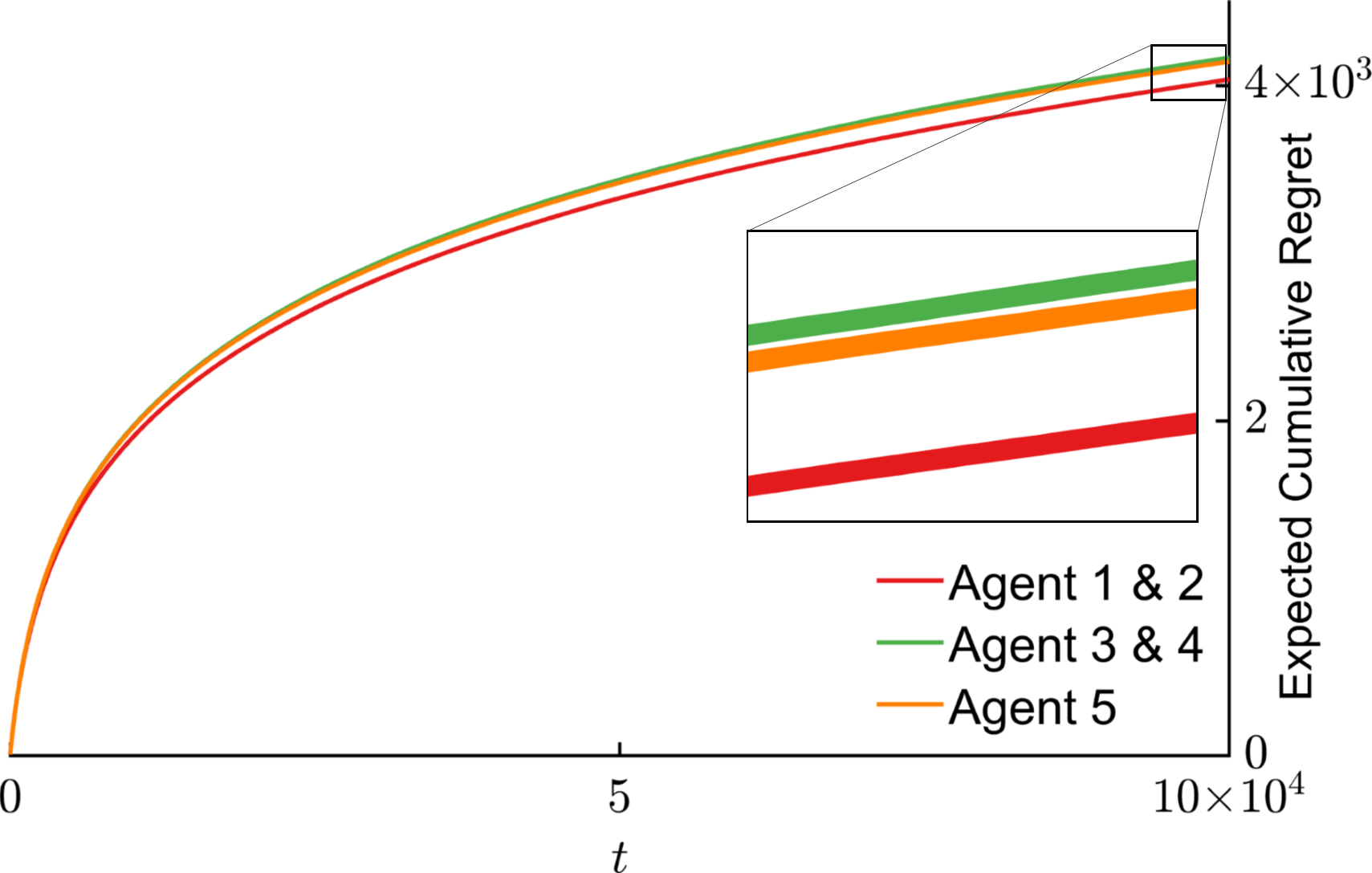}
	\caption{Simulation results comparing expected cumulative regret for agents in the fixed graph  shown in Table \ref{table:5agentgraph}.  }
	\label{fig:house_nodecompare}
\end{figure}

	For this example, degree centrality does not distinguish agent $5$ from agents $3$ and $4$, whereas  $\epsilon_c^k$ (and information centrality) does.  Further, according to information centrality, which is larger the more central the node, node $5$ is less information central than nodes $3$ and $4$.  In contrast, according to  $\epsilon_c^k$, which is smaller the more central the node, node $5$ is more explore-exploit central than nodes $3$ and $4$. 
	
	{As in the prior example, the simulation results of Figure~\ref{fig:house_nodecompare} validate the prediction of Theorem \ref{thm:regret-coop-ucb2} (Remark~\ref{Remark:Indiv}) that the ordering of agents by performance, as measured by expected cumulative regret, is the same as the ordering of agents by nodal explore-exploit centrality index $\epsilon_c^k$, with smaller $\epsilon_c^k$ corresponding to lower regret.  
	In contrast, for this example, the ordering of agents by degree or information centrality do not predict the ordering of agents by performance.}
	
    We have found some parameter regimes, specifically for rewards that are far apart in mean value, where information centrality does give the correct ordering of performance, rather than $\epsilon_c^k$.  This is likely due to sensitivity of performance to the $\Delta_i$.  However, we have observed that $\epsilon_c^k$ is broadly predictive of performance for a variety of regimes and network graphs. 
\end{example}

	


	

\subsection{{Validation of Relative Performance of Networks as Predicted by Graph Explore-Exploit Index $\epsilon_n$}}

\begin{figure}[ht!]
	\centering
	\def \smallplotwidth {0.3\linewidth}
	
	\def \n {5}
	\def \offset = {30}
	\def \radius {1cm}
	\def \margin {8} 

	\begin{subfigure}[b]{\smallplotwidth}
		\centering
		
		\begin{tikzpicture}[ every node /.style=minimum size=4em, inner sep=0pt]		
		
		\node[Generic1] (1) at (18:\radius) {\tiny $\star$};
		\node[Generic1] (2) at (90:\radius) {\tiny $\star$};
		\node[Generic1] (3) at (162:\radius) {\tiny $\star$};
		\node[Generic1] (4) at (234:\radius)  {\tiny $\star$};
		\node[Generic1] (5) at (306:\radius)  {\tiny $\star$};
		
		\draw (1) -- (2);
		\draw (1) -- (3);
		\draw (1) -- (4);
		\draw (1) -- (5);
		\draw (2) -- (3);
		\draw (2) -- (4);
		\draw (2) -- (5);
		\draw (3) -- (4);
		\draw (3) -- (5);
		\draw (4) -- (5);
		\end{tikzpicture}
		\caption*{All-to-All \\ $\epsilon_n \approx 439$}
	\end{subfigure}%
	\begin{subfigure}[b]{\smallplotwidth}
		\centering
		
		\begin{tikzpicture}[ every node /.style=minimum size=4em, inner sep=0pt]

		\node[Generic2] (1) at (18:\radius) {\tiny $\star$};
		\node[Generic2] (2) at (90:\radius) {\tiny $\star$};
		\node[Generic2] (3) at (162:\radius) {\tiny $\star$};
		\node[Generic2] (4) at (234:\radius)  {\tiny $\star$};
		\node[Generic2] (5) at (306:\radius)  {\tiny $\star$};
		
		\draw (1) -- (2);
		\draw (2) -- (3);
		\draw (3) -- (4);
		\draw (4) -- (5);
		\draw (5) -- (1);
		\end{tikzpicture}
		\caption*{Ring \\ $\epsilon_n \approx 663$}
	\end{subfigure}%
	\begin{subfigure}[b]{\smallplotwidth}
	\centering
	
	\begin{tikzpicture}[ every node /.style=minimum size=4em, inner sep=0pt]		
	
		\node[Generic3] (1) at (0:0) {\tiny $\star$};
		\node[Generic3] (2) [right of = 1] {\tiny $\star$};
		\node[Generic3] (3) [below of = 1] {};
		\node[Generic3] (4) [right of = 3]  {};
		\node[Generic3] (5) at (60:\radius)  {};
		
		\draw (1) -- (2);
		\draw (2) -- (4);
		\draw (3) -- (4);
		\draw (5) -- (1);
		\draw (2) -- (5);
		\draw (3) -- (1);
		\end{tikzpicture}
		\caption*{House \\ $\epsilon_n \approx 724$}
	\end{subfigure}%
	\\
	\begin{subfigure}[b]{\smallplotwidth}
		\centering
		
		\begin{tikzpicture}[ every node /.style=minimum size=4em, inner sep=0pt]		
		
		\node[Generic4] (1) at (18:\radius) {};
		\node[Generic4] (2) at (90:\radius) {\tiny $\star$};
		\node[Generic4] (3) at (162:\radius) {};
		\node[Generic4] (4) at (234:\radius)  {};
		\node[Generic4] (5) at (306:\radius)  {};
		
		\draw (1) -- (2);
		\draw (2) -- (3);
		\draw (3) -- (4);
		\draw (5) -- (1);
		\end{tikzpicture}
		\caption*{Line \\ $\epsilon_n \approx 1334$}
	\end{subfigure}%
	\begin{subfigure}[b]{\smallplotwidth}
	\centering
	
		\begin{tikzpicture}[ every node /.style=minimum size=4em, inner sep=0pt]		
		
		\node[Generic5] (1) at (0:0) {\tiny $\star$};
		\node[Generic5] (2) at (45:\radius) {};
		\node[Generic5] (3) at (135:\radius) {};
		\node[Generic5] (4) at (225:\radius)  {};
		\node[Generic5] (5) at (315:\radius)  {};
		
		\draw (1) -- (2);
		\draw (1) -- (3);
		\draw (1) -- (4);
		\draw (1) -- (5);
		\end{tikzpicture}
		\caption*{Star \\ $\epsilon_n \approx 1781$}
	\end{subfigure}%

	\captionof{table}[Fixed networks used in Examples \ref{ex:5node_graphcompare} and \ref{ex:5node_graphcompare_bestagent}.]{Fixed networks used in Example \ref{ex:5node_graphcompare} arranged in order of increasing value of $\epsilon_n$.  Values of $\epsilon_n$ are calculated using $P$ as in \eqref{Pdefn} and $\kappa = 0.02$.  A $\star$ indicates best performing agent(s) in the graph as determined in the simulations. 
	}  
	\label{table:5agentgraph_compare}
\end{figure}

\addtocounter{figure}{-1}
\begin{figure}[ht!]	
	\centering
	\includegraphics[width=\GraphWidth]{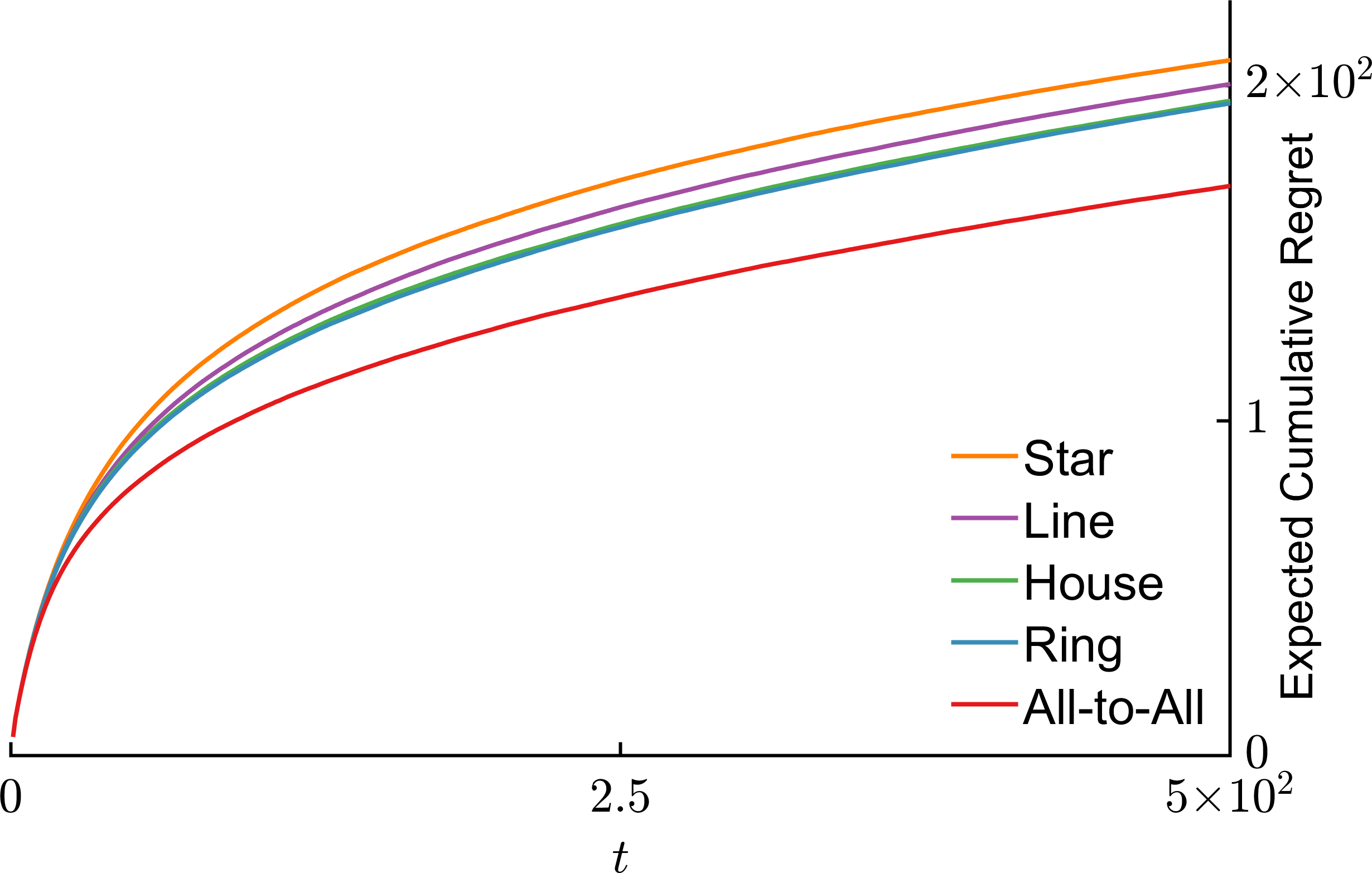}
	\caption{Simulation results of expected cumulative regret of the group for each of the fixed graphs shown in Table \ref{table:5agentgraph_compare}.}
	\label{fig:5AgentGraphCompare} 
\end{figure}

\begin{figure}[ht!]	
	\centering
	\includegraphics[width=\GraphWidth]{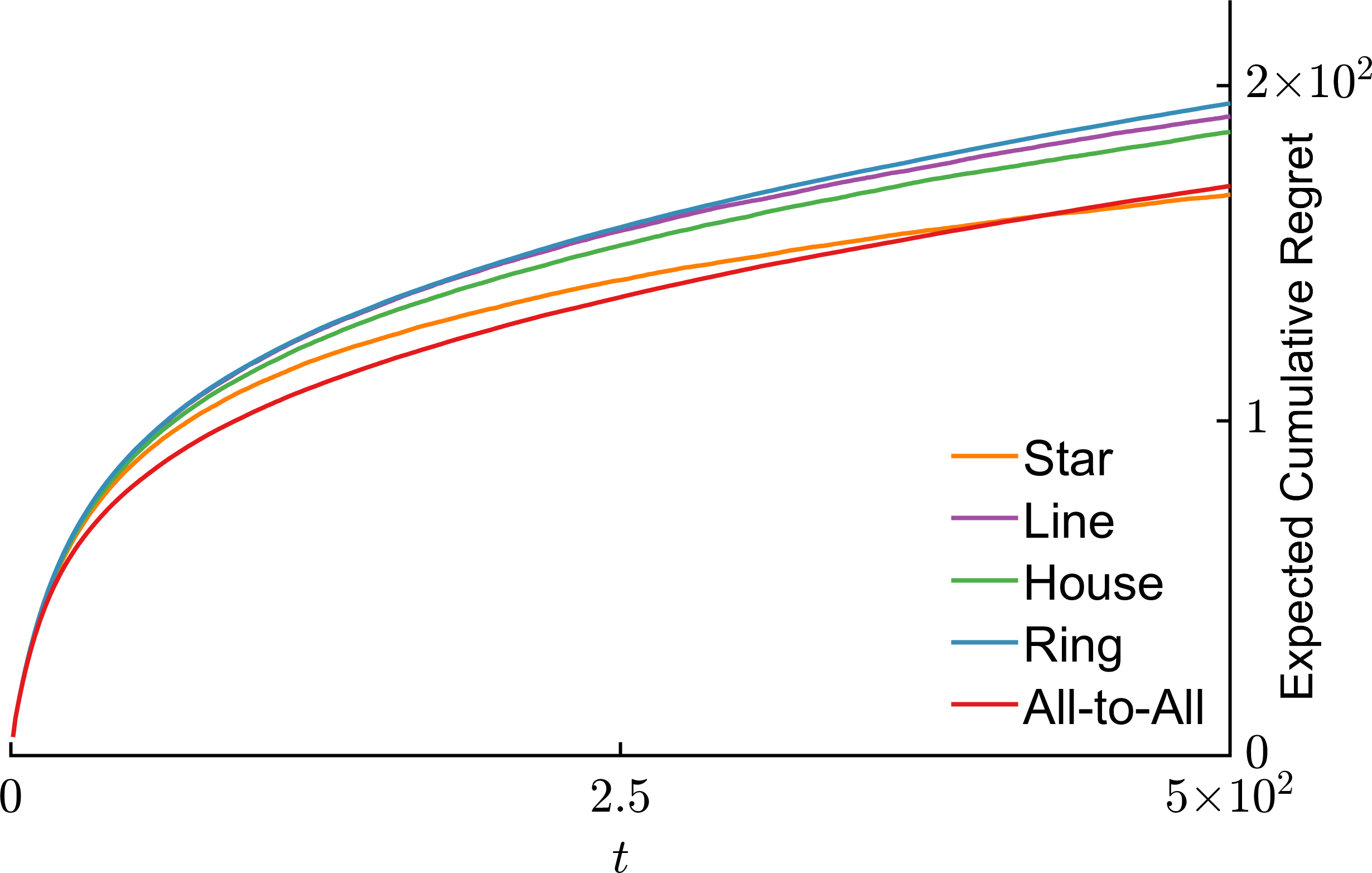}
	\caption{Simulation results of expected cumulative regret of the agent with lowest regret in each of  the fixed graphs shown in Table \ref{table:5agentgraph_compare}.}
	\label{fig:5AgentGraphCompare_BestAgent} 
\end{figure}

\begin{figure}[ht!]	
	\centering
	\includegraphics[width=\GraphWidth]{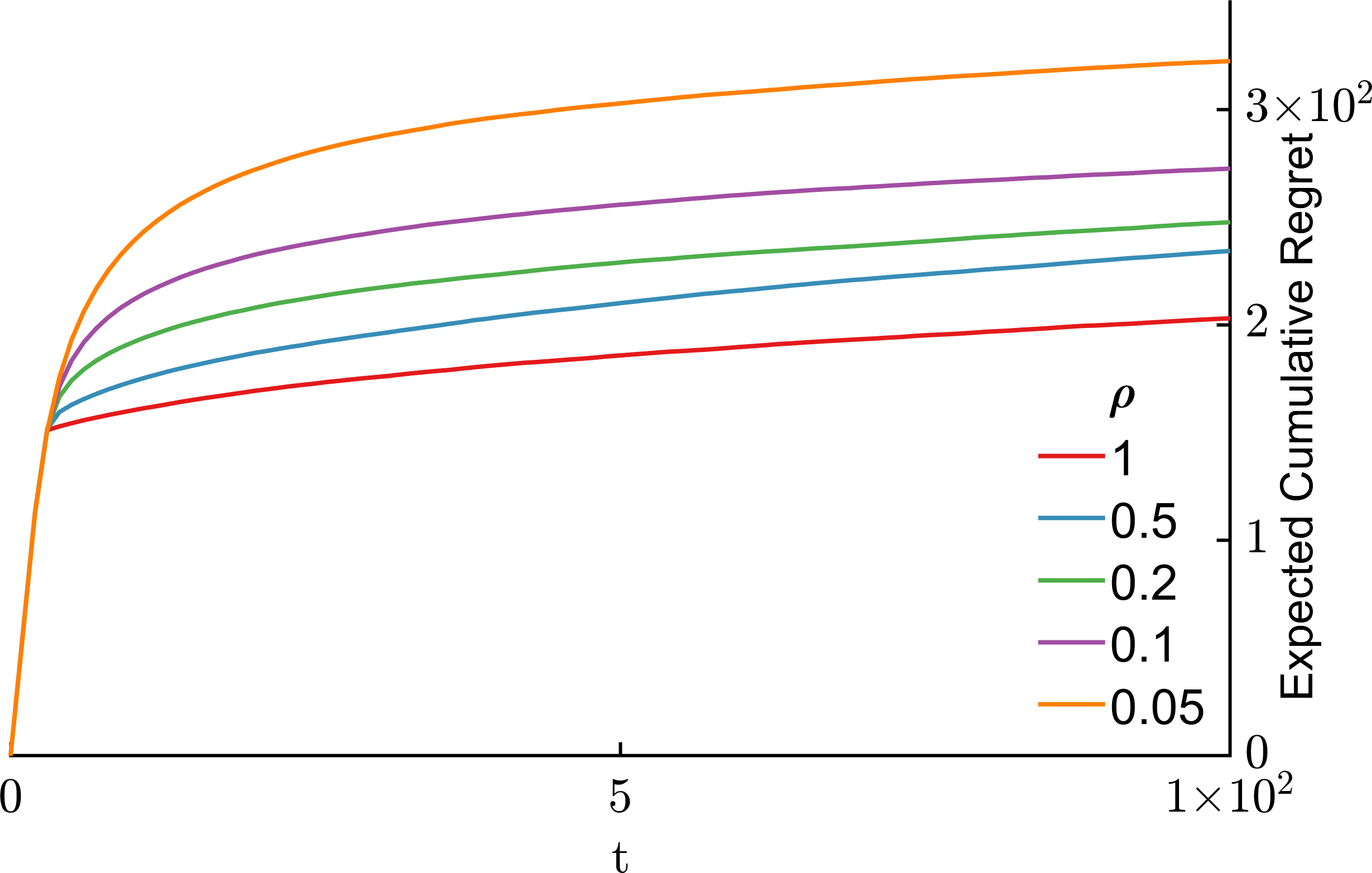}
	\caption{Simulation results of expected cumulative regret of $100$ agents on an Erd{\"o}s-R{\'e}yni random graph for five different values of edge probability $\rho$.
	}
	\label{fig:LargeERGraphs} 
\end{figure}

\begin{example}\label{ex:5node_graphcompare}
Figure \ref{fig:5AgentGraphCompare} compares the expected cumulative regret averaged over all agents in each of the five graphs in Table \ref{table:5agentgraph_compare}, where agents use coop-UCB2.  The value of $\epsilon_n$ is shown in Table \ref{table:5agentgraph_compare} for each graph.  Theorem \ref{thm:regret-coop-ucb2} predicts that graphs with lower $\epsilon_n$ will have lower average expected cumulative regret.  Here we use two arms and $\kappa=0.02$.  Figure \ref{fig:5AgentGraphCompare} verifies this prediction, showing the the ordering of graphs by performance is equal to the ordering of graphs by the graph explore-exploit index $\epsilon_n$.

	Figure \ref{fig:5AgentGraphCompare_BestAgent} compares expected cumulative regret for  best performing agent(s) in each of the five graphs in Table \ref{table:5agentgraph_compare}.  
	The central agent in the star graph outperforms the best agent in the all-to-all graph despite the star graph's poor group performance.  This indicates that the four peripheral agents are doing most of the exploration.  The stark difference in the propensity to explore between the central and peripheral agents in the star graph demonstrates that regret accumulation for different agents could be controlled by design of the communication graph structure.
\end{example}

\begin{example}\label{ex:large_er_graphs}
Figure \ref{fig:LargeERGraphs} compares the average expected cumulative regret of 100 agents using coop-UCB2 (two arms and $\kappa = \frac{d_{\text{max}}}{d_{\text{max}}-1}$) for a range of Erd{\"o}s-R{\'e}yni (ER) random graphs~\cite{bollobas1998random}. 
We simulate five values of the probability $\rho$ of a connection between any two agents, from $\rho = 0.05$ (weakly connected) to $\rho = 1.0$ (fusion center). 
For each $\rho$ we randomly generated $15$ ER graphs. 
We show the results of $2 \! \times \! 10^4$ simulations per graph, or $3\! \times \! 10^5$ simulations per $\rho$.  The plot shows how performance improves as the connection between agents increases.

\end{example}


\section{Final Remarks}\label{FinalRemarks}
We have used a distributed multi-agent MAB problem to explore cooperative decision making under uncertainty for networks of agents.  Each agent makes choices among arms to maximize its own individual reward but cooperates with others in the group by communicating its estimates across the network. 
We considered both an unconstrained reward model, in which agents are not penalized if they choose the same arm at the same time, and a constrained reward model, in which agents that choose the same arm at the same time receive no reward.

We designed an algorithm for distributed cooperative estimation of mean reward at each arm.  Building on this, we designed the coop-UCB2 and coop-UCB2-selective-learning algorithms for the unconstrained and contrained reward models, respectively. These are distributed algorithms that enable agents to leverage the information shared by neighbors in their decision making, without requiring that agents know the network graph structure.   We proved bounds on performance, showing logarithmic expected cumulative group regret close to that of a centralized fusion center, for both reward models. 

From the bounds on regret, we defined a novel graph explore-exploit index and nodal explore-exploit centrality index, which  depend only on the network graph topology. The group index predicts the ordering by performance of network graphs and the nodal index predicts the ordering by performance of the nodes.  

Future research directions include rigorously exploring other communications schemes, which may offer better performance or be better suited to modeling classes of networked systems.  The tradeoff between communication frequency and performance (\cite{madhushani2020observation}) as well as the presence of noisy communications (\cite{SavasACC2017}) will be important considerations.  

\section{Acknowledgements}
The authors thank Tor Lattimore for pointing out an error in a previous version of one of the proofs. 


\appendix

\section{Proof of Proposition~\ref{prop:coop-est}}\label{app:proof-prop1}
We begin with statement (i). 
From~\eqref{nhatdefn} it follows that
\begin{align}
\mathbf{\hat{n}}_i(t) & =P^t \mathbf{\hat{n}}_i(0) + \sum_{\tau = 1}^{t} P^{t-\tau+1} \boldsymbol{\xi}_i(\tau) \nonumber \\
&=\sum_{\tau = 0}^{t}  \Big[ \frac{1}{M} \mathbf{1}_M \mathbf{1}_M^\top \boldsymbol{\xi}_i(\tau) + \sum_{p=2}^{M} \lambda_p^{t-\tau+1} \mathbf{u}_p {\mathbf{u}_p}^\top \boldsymbol{\xi}_i(\tau) \Big] \nonumber\\
&=  n_i^{\text{cent}}(t) \mathbf{1}_M + \sum_{\tau = 1}^{t} \sum_{p=2}^{M} \lambda_p^{t-\tau+1} \mathbf{u}_p {\mathbf{u}_p}^\top \boldsymbol{\xi}_i(\tau). \label{solndecomp}
\end{align}

For (i), we bound the $k$-th entry of the second term of~\eqref{solndecomp}:
\begin{align}
\sum_{\tau = 1}^{t} \sum_{p=2}^M  \lambda_p^{t-\tau+1}  \big( \mathbf{u}_p {\mathbf{u}_p}^\top \boldsymbol{\xi}_i(\tau) \big)_k  \!
&\leq \!   \sum_{\tau = 1}^{t} \sum_{p=2}^M |\lambda_p^{t-\tau+1}| \| \mathbf{u}_p\|_2^2   \| \boldsymbol{\xi}_i(\tau) \|_2  \nonumber \\
& \leq   \sqrt{M} \sum_{\tau = 1}^{t} \sum_{p=2}^M  |\lambda_p^{t-\tau+1}| \le \epsilon_n.  \nonumber
\end{align}

To prove statement (ii), let $\nu_{pwi}(\tau) \!=\! \sum_{j=1}^M u_p^j u_w^j \xi_i^j(\tau)$ and then 
\begin{align}
\sum_{\tau=1}^{t} &\sum_{j=1}^M \left(\sum_{p=1}^M \lambda_p^{t-\tau+1}  u_p^k u_p^j  \right)^2 \xi_i^j(\tau) \nonumber \\
& = \sum_{\tau=1}^t \sum_{p=1}^M \sum_{w=1}^M (\lambda_p \lambda_w)^{t-\tau+1} u_p^k u_w^k \sum_{j=1}^M u_p^j u_w^j \xi_i^j(\tau) \nonumber \\
& = \sum_{\tau=1}^t \sum_{p=1}^M \sum_{w=2}^M (\lambda_p \lambda_w)^{t-\tau+1} u_p^k u_w^k \nu_{pwi}(\tau) \nonumber \\
& \quad \quad + \frac{1}{M} \sum_{\tau=1}^t \sum_{p=1}^M  \sum_{j=1}^M \lambda_p^{t-\tau+1} u_p^k u_p^j \xi_i^j(\tau) \nonumber \\
& = \sum_{\tau=1}^t \sum_{p=1}^M \sum_{w=2}^M (\lambda_p \lambda_w)^{t-\tau+1} u_p^k u_w^k \nu_{pwi}(\tau) +\frac{1}{M} \hat{n}_i^k(t). \label{covssoln}
\end{align}
This establishes (ii) since for the first term of \eqref{covssoln}:
\begin{align*}
\sum_{\tau=1}^t  (\lambda_p \lambda_w)^{t-\tau+1} u_p^k u_w^k \nu_{pwi}(\tau) 
&  
\leq \sum_{\tau=1}^t  |(\lambda_p \lambda_w)^{t-\tau+1} || u_p^k u_w^k \nu_{pwi}(\tau) | \nonumber \\ 
\leq  \sum_{\tau = 0}^{t-1}   |\lambda_p \lambda_w|^{t-\tau+1} a_{pw}(k) 
&
\leq   \frac{|\lambda_p \lambda_w|}{1-|\lambda_p \lambda_w|}  a_{pw}(k). \label{cov1stmid}
\end{align*}


\section{Proof of Theorem~\ref{thm:EstDevBoudnsCondensed}}\label{app:proof-thm-1}

We begin by noting that $\hat{s}_i^k(t)$ can be decomposed as 
	\begin{equation}
		\hat{s}_i^k(t) = \sum_{\tau=1}^{t} \sum_{p=1}^{M} \lambda_p^{t-\tau+1} \sum_{j=1}^M u_p^k u_p^j r_i^j(\tau) \xi_i^j(\tau). \label{eqn:s_i_hat_defn}
	\end{equation}
	Let $\hat{s}_i^{kp}(t) =  \sum_{\tau=1}^{t} \lambda_p^{t-\tau+1} \sum_{j=1}^M u_p^k u_p^j r_i^j(\tau) \xi_i^j(\tau)$.  Then, 
	\begin{align}
		\sum_{p=1}^M \hat{s}_i^{kp}(t)  =   \sum_{p=1}^{M} \sum_{j=1}^M \lambda_p u_p^k u_p^j r_i^j(t) \xi_i^j(t) + \sum_{p=1}^M \lambda_p \hat{s}_i^{kp}(t-1). \label{eqn:shat_modaldecomp}
	\end{align}

	It follows from \eqref{eqn:s_i_hat_defn} and \eqref{eqn:shat_modaldecomp} that for any $\Theta > 0$
	\begin{align*}
		\mathbb{E}& \left[\expp{\Theta \hat{s}_i^k(t)} \middle \vert \mc F_{t-1} \right] =
		\mathbb{E}\left[\expp{\Theta \sum_{p=1}^M \hat{s}_i^{kp}(t)} \middle \vert \mc F_{t-1} \right] 	
		\\
		& = \mathbb{E} \left[ \expp{\Theta \sum_{p=1}^M \lambda_p \sum_{j=1}^M u_p^k u_p^j r_i^j(t) \xi_i^j(t)}  \middle \vert \mc F_{t-1} \right] K_{(t-1)} \\
		%
		& = \prod_{j=1}^M \mathbb{E} \left[ \expp{\Theta \sum_{p=1}^M \lambda_p  u_p^k u_p^j r_i^j(t) \xi_i^j(t)}  \middle \vert \mc F_{t-1} \right] K_{(t-1)} 
				\end{align*}
		\begin{align*}
		%
		& = \expp{\sum_{j=1}^M \phi_i \left( \Theta \sum_{p=1}^M \lambda_p u_p^k u_p^j \xi_i^j(t) r_i^j(t)  \right)  } K_{(t-1)} \\
				%
		& = \expp{\sum_{j=1}^M \phi_i \left( \Theta \sum_{p=1}^M \lambda_p u_p^k u_p^j r_i^j(t)  \right) \xi_i^j(t) } {K_{(t-1)},}\\
	\end{align*}
	\[
	{K_{(t-1)} = \expp{\Theta \sum_{p=1}^M \lambda_p \hat{s}_i^{kp}(t-1)},}
	\]
	and the second-to-last equality follows since,
	conditioned on $\mc F_{t-1}$, $ \xi_i^j(t)$ is 
	deterministic 
	and $r_i^j(t)$ are i.i.d.~for each $j \in \until{M}$.  The last equality follows since $\xi_i^j(t)$ is binary and the two expressions are the same for $\xi_i^j(t)\in \{0,1\}$. Therefore, 
\[
		\mathbb{E} \Bigg[ \! \exp \bigg(\Theta \!\! \sum_{p=1}^M \hat{s}_i^{kp}(t) - \! \sum_{j=1}^M \phi_i \left( \Theta \!\! \sum_{p=1}^M \! \lambda_p u_p^k u_p^j r_i^j(t)  \right) 
		\xi_i^j(t)\bigg)
			\bigg| \mc F_{t-1} \! \Bigg] \! = \! {K_{(t-1)}.}
			\]
	%
	Using the above argument recursively with $s_i^k(0)=0$, we obtain
		\[
		\mathbb{E} \Bigg[\exp \Bigg( \Theta  \hat{s}_i^k(t)  
		- \sum_{\tau=1}^t \sum_{j=1}^M \phi_i \left(\Theta \sum_{p=1}^M \lambda_p^{t-\tau+1} u_p^k u_p^j r_i^j(\tau) \right) 
		\xi_i^j(\tau) \Bigg ) \Bigg] =1. 
	\]
	For sub-Gaussian random variables $\phi_i(\beta) \leq \beta m_i + \frac{1}{2} \sigma_g^2 \beta^2$, thus
	\begin{align}
		1 &=\mathbb{E} \Bigg [ \! \exp \Bigg( \Theta \!\left(\hat{s}_i^k\!(t) \!-\! m_i \hat{n}_i^k\!(t)\right) \label{eqn:CGF_sub}\\
		& \qquad \qquad - \frac{\sigma_g^2}{2}\! \sum_{\tau=1}^{t} \sum_{j=1}^M \!\left(\! \Theta\! \sum_{p=1}^M \lambda_p^{t-\tau+1} \! u_p^k u_p^j  \right)^2  \! \! \xi_i^j(\tau) \Bigg) \Bigg ] \nonumber \\
		&\ge \mathbb{E} \Bigg [ \expp{ \! \Theta \!\left(\hat{s}_i^k\!(t) \!-\! m_i \hat{n}_i^k\!(t)\right) \!-\! \frac{\sigma_g^2 \Theta^2}{2M} \left(\hat{n}_i^k(t) + \epsilon_c^k \right)}\Bigg ], \nonumber 
	\end{align}
	where the last inequality follows from the second statement of Proposition~\ref{prop:coop-est}. 	
	Now using the Markov inequality, we obtain
	\begin{align}
		&e^{-a} \! \geq \mathbb{P} \! \left( \! \expp{ \! \Theta \!\left(\hat{s}_i^k\!(t) \!-\! m_i \hat{n}_i^k\!(t)\right) \!-\! \frac{\sigma_g^2 \Theta^2}{2M} \left(\hat{n}_i^k(t) + \epsilon_c^k \right)} \! \! \geq \! e^{a} \! \right) \nonumber \\
		&= \mathbb{P} \Bigg( \frac{\hat{s}_i^k(t) - m_i \hat{n}_i^k(t)}{\left( \frac{1}{M}\left(\hat{n}_i^k(t) + \epsilon_c^k \right)  \right)^{\frac{1}{2}}}  \geq  \frac{a}{\Theta} \left( \frac{1}{M}\left(\hat{n}_i^k(t) + \epsilon_c^k \right)  \right)^{-\frac{1}{2}} \nonumber \\
		& \qquad \qquad \qquad \qquad  + \frac{\sigma_g^2 \Theta}{2}\left( \frac{1}{M}\left(\hat{n}_i^k(t) + \epsilon_c^k \right)  \right)^{\frac{1}{2}} \Bigg ).
		\label{FromMarkovIn}
	\end{align}
	
Random variable $\hat n_i^k(t)$ on the right  of \eqref{FromMarkovIn} 
depends on the random variable on the left. So, we use union bounds on $\hat n_i^k(t)$ to obtain the 
concentration inequality. Consider an exponentially increasing sequence of time indices $\setdef{(1+\eta)^{h-1}}{h \in \until{D}}$, where $D = \left \lceil \frac{\lnn{t+ \epsilon_n}}{\lnn{1 + \eta}} \right \rceil$ and $\eta >0$. 
For every $h \in \{1,\dots,D\}$, define
	\begin{equation}
		\Theta_h = \frac{1}{\sigma_g} \sqrt{\frac{2 a M}{(1+\eta)^{h-\frac{1}{2}} + \epsilon_c^k}}.
	\end{equation}
	Thus, if $(1+\eta)^{h-1} \leq \hat{n}_i^k(t) \leq (1+\eta)^{h}$, then
	\begin{align}
		&\frac{a}{\Theta_h} \left( \frac{1}{M}\left(\hat{n}_i^k(t) + \epsilon_c^k \right)  \right)^{-\frac{1}{2}}  + \frac{ \sigma_g^2 \Theta_h}{2}\left( \frac{1}{M}\left(\hat{n}_i^k(t) + \epsilon_c^k \right)  \right)^{\frac{1}{2}} \nonumber \\
		& = \sigma_g \sqrt{\frac{a}{2}} \left(\! \left( \frac{(1+ \eta)^{h-\frac{1}{2}} + \epsilon_c^k }{\hat{n}_i^k(t) + \epsilon_c^k} \right)^{\frac{1}{2}} \!\! + \left( \frac{\hat{n}_i^k(t) + \epsilon_c^k}{(1+ \eta)^{h-\frac{1}{2}} + \epsilon_c^k } \right)^{\frac{1}{2}}   \! \right) \nonumber \\
		& \le \sigma_g \sqrt{\frac{a}{2}} \left(\! \left( \frac{(1+ \eta)^{h-\frac{1}{2}} }{\hat{n}_i^k(t) } \right)^{\frac{1}{2}} \!\! + \left( \frac{\hat{n}_i^k(t) }{(1+ \eta)^{h-\frac{1}{2}}} \right)^{\frac{1}{2}}   \! \right) \nonumber \\
		&\leq \sigma_g \sqrt{\frac{a}{2}} \left(\! (1+\eta)^{\frac{1}{4}} +  (1+\eta)^{-\frac{1}{4}}   \! \right),
		\label{Finding_delta}
	\end{align}
	where the second-to-last inequality follows from the fact that for $a,b >0$, the function $\epsilon \mapsto \sqrt{\frac{a + \epsilon}{b+\epsilon}} + \sqrt{\frac{b + \epsilon}{a+\epsilon}}$ with domain $\real_{\ge 0}$ is monotonically non-increasing, and
	the last inequality follows from the fact that for $\eta>0$, the function $ x \mapsto \sqrt{\frac{(1+ \eta)^{h-\frac{1}{2}}}{x}} \ + \sqrt{\frac{x}{(1+ \eta)^{h-\frac{1}{2}}}} $ with domain $ [(1+\eta)^{h-1}, (1+\eta)^h]$ achieves its maximum at either of the boundaries. {Applying union bounds on $D$ possible values of $h$ and using \eqref{Finding_delta} for $(1+\eta)^{h-1} \le \hat n_i^k(t) \le (1+\eta)^h$, from \eqref{FromMarkovIn} we get}
	\begin{align*}
		&\mathbb{P} \left( \frac{\hat{s}_i^k(t) - m_i \hat{n}_i^k(t)}{\left(\frac{1}{M} \left(\hat{n}_i^k(t) + \epsilon_c^k\right)\right)^{\frac{1}{2}}} > \sigma_g \sqrt{\frac{a}{2}} \left( (1+\eta)^{\frac{1}{4}} + (1+\eta)^{-\frac{1}{4}}   \right)  \right) \\
		& \le \sum_{h=1}^D \mathbb{P}\; \Vast( \frac{\hat{s}_i^k(t) - m_i \hat{n}_i^k(t)}{\left(\frac{1}{M} \left(\hat{n}_i^k(t) + \epsilon_c^k\right)\right)^{\frac{1}{2}}} \! > \! \frac{a}{\Theta_h}\left(\frac{1}{M} \left(\hat{n}_i^k(t) + \epsilon_c^k\right)\right)^{-\frac{1}{2}}  \\
		&\qquad \qquad +\frac{ \sigma_g^2 \Theta_h}{2} \left(\frac{1}{M} \left(\hat{n}_i^k(t) + \epsilon_c^k\right)\right)^{\frac{1}{2}}\\
		&\qquad \quad   \; \&\;  (1+\eta)^{h-1} \le \hat n_i^k(t) + \epsilon_c^k< (1+\eta)^h   \Vast) \le D e^{-a}. 
	\end{align*}

	Setting $ \sigma_g \sqrt{\frac{a}{2}} \left( (1+\eta)^{\frac{1}{4}} + (1+\eta)^{-\frac{1}{4}} \right) = \delta$  yields 
	\begin{align*}
		&\mathbb{P} \left( \frac{\hat{s}_i^k(t) - m_i \hat{n}_i^k(t)}{\left(\frac{1}{M} \left(\hat{n}_i^k(t) + \epsilon_c^k\right)\right)^{\frac{1}{2}}} > \delta \right)  \leq D \expp{\frac{-2 \delta^2}{\sigma_g^2 \left( (1+\eta)^{\frac{1}{4}}\! +\! (1+\eta)^{-\frac{1}{4}} \right)^2 }}.
	\end{align*}
	{It can be verified using Taylor series expansion that} 
	\begin{equation*}
		\frac{4}{\left( (1+\eta)^{\frac{1}{4}} + (1+\eta)^{-\frac{1}{4}} \right)^2} \geq 1-\frac{\eta^2}{16}.
	\end{equation*}
	Therefore,	it holds that 
	\begin{align*}
		&\mathbb{P} \left( \frac{\hat{s}_i^k(t) - m_i \hat{n}_i^k(t)}{\left(\frac{1}{M} \left(\hat{n}_i^k(t) + \epsilon_c^k\right)\right)^{\frac{1}{2}}} > \delta \right) \leq D \expp{\frac{-\delta^2}{2\sigma_g^2}\left( 1-\frac{\eta^2}{16} \right) } \\
		& \qquad \qquad  = \Bigg\lceil \frac{\lnn{t + \epsilon_n}}{\lnn{1+\eta}} \Bigg\rceil  \expp{\frac{-\delta^2}{2\sigma_g^2}\left( 1-\frac{\eta^2}{16} \right) }.
	\end{align*}
	
\section{Pseudocode for coop-UCB2}\label{app:pseudocode}

\IncMargin{.3em}
\begin{algorithm}[ht!]
  {\footnotesize
   \SetKwInOut{Input}{Input}
   \SetKwInOut{Set}{Set}
   \SetKwInOut{Title}{Algorithm}
   \SetKwInOut{Require}{Require}
   \SetKwInOut{Output}{Output}
   \Input{arms $\until{N}$, agents $\until{M}$\;} 
   
      \Input{parameters $\sigma_g>0$, $\eta>0$, $\gamma>1$, function $f(t)$\;}
   \Output{allocation sequence $i^k(t), t\in \until{T}, k \in \until{M}$\;}

   \medskip

\nl {\bf set} $\hat n_i^k \leftarrow 0, \hat s_i^k \leftarrow 0$, $i\in \until{N},  k \in \until{M}$\;
\smallskip

   \nl \For {$t \in \until{T}$ }{

\If{$t\le N$}{ 
\emph{\% Initialization} 

\smallskip 

\nl \For{each agent $k \in \until{M}$ \smallskip }{ $i^k(t) \leftarrow t$ \; \smallskip

collect reward $r^k(t)$ \;

}

\smallskip 
}

\nl \Else {

\nl \For{each agent $k \in \until{M}$ \smallskip }{

\emph{\% select arm with maximum $Q_i^k$}
\smallskip

\For{ each arm $i \in \until{N}$\smallskip }{
$Q_i^k \leftarrow \frac{\hat s_i^k}{\hat n_i^k} + \sigma_g \; \sqrt[]{\frac{2\gamma}{G(\eta)} \cdot \frac{\hat{n}_i^{k} +  f(t-1)}{M\hat{n}_i^{k}}\cdot\frac{ \lnn{t-1}}{\hat{n}_i^{k}}}$\; \smallskip
}

$ i^k(t) \leftarrow \argmax\setdef{Q_i^k}{i \in \until{N}}$ \;

\smallskip

collect reward $r^k(t)$ \; 

}

\smallskip 

}

\nl \For {$i \in \until{N}$ \smallskip }{

\nl update $\bs{\hat n_i}$ and $\bs{\hat s_i}$ using~\eqref{nhatdefn} and~\eqref{shatdefn}\;
\smallskip 
}

}

    \caption{\textit{coop-UCB2}}
  \label{algo:coop-ucb2}}
\end{algorithm} 
\DecMargin{.3em}


\section{Proof of Theorem~\ref{thm:regret-coop-ucb2}}\label{app:proof-thm2}
We proceed similarly to~\cite{PA-NCB-PF:02}.  The number of selections of a suboptimal arm $i$ by all agents until time $T$ is 
\begin{align}
& \sum_{k=1}^M  n_i^k(T) 
\leq \sum_{k=1}^M (t_k^\dag-1) + \sum_{k=1}^M \sum_{t= t_k^\dag}^T  \mathds{1}(Q_i^k(t-1) \geq Q_{i^*}^k(t-1))\nonumber \\
& \leq  A + \! \sum_{k=1}^M \left( (t_k^\dag-1) 
\!+\! \sum_{t=t_k^\dag}^T \mathds{1}(Q_i^k(t-1) \geq Q_{i^*}^k(t-1), M {n}^{\text{cent}}_i \geq A)\right)  \label{suboptimal-samples}
\end{align}
where $A >0$ is a constant that will be chosen later.  

At a given time $t+1$ an individual agent $k$ will choose a suboptimal arm only if 
$ Q_i^k(t) \geq Q_{i^*}^k(t)$. 
For this condition to be true at least one of the following three conditions must hold:
\begin{align}
\hat{\mu}_{i^*}(t) &\leq m_{i^*} - C_{i^*}^k(t) \label{1stcond} \\
\hat{\mu}_{i}(t) &\geq m_{i} + C_{i}^k(t) \label{2ndcond} \\
m_{i^*} &< m_{i} + 2 C_{i}^k(t). \label{3rdcond} 
\end{align}
We bound the probability that \eqref{1stcond} and \eqref{2ndcond} hold using Theorem~\ref{thm:EstDevBoudnsCondensed}: 
\begin{align*}
&\mathbb{P}\left( \eqref{1stcond} \textrm{ holds } | \, t \geq  t_k^\dagger \right) \\
& \qquad = \mathbb{P} \!\left(\! \frac{\hat{s}_{i}^k - m_{i} \hat{n}_{i}^k}{\sqrt{ \frac{1}{M} \left( \hat{n}_i^{k}(t) +  f(t)\right) }} \! \geq \! \sigma_g \sqrt{\frac{2 \gamma \lnn{t}}{G(\eta)}} \, \Bigg | \, t \geq t_k^\dagger \right)\\
& \qquad \leq \mathbb{P} \!\left(\! \frac{\hat{s}_{i}^k - m_{i} \hat{n}_{i}^k}{\sqrt{ \frac{1}{M} \left( \hat{n}_i^{k}(t) +  \epsilon_c^k\right) }} \! \geq \! \sigma_g \sqrt{\frac{2 \gamma \lnn{t}}{G(\eta)}} \, \Bigg | \, t \geq t_k^\dagger \right)\\
& \qquad \leq \left( \frac{\lnn{t}}{\lnn{1+\eta}} + \frac{\lnn{1+\epsilon_n}}{\lnn{1+\eta}} + 1\right) \frac{1}{t^\gamma},
\end{align*}
\begin{equation*}
\mathbb{P}\left( \eqref{2ndcond} \textrm{ holds } | t \geq  t_k^\dagger \right) \leq \left( \frac{\lnn{t}}{\lnn{1+\eta}} + \frac{\lnn{1+\epsilon_n}}{\lnn{1+\eta}} + 1\right) \frac{1}{t^\gamma}.
\end{equation*}

We now examine the event \eqref{3rdcond}. 
\begin{align}
m_{i^*} & < m_i + 2C_i^k(t) \nonumber \\ 
\implies \hat{n}_i^k(t)^2 \frac{\Delta_i^2M G(\eta)}{8 \sigma_g^2} &- \gamma \hat{n}_i^k(t)\ln(t) -  \gamma f(t) \ln(t) < 0. \label{eqn:3rdcondquad}
\end{align}
The quadratic equation~\eqref{eqn:3rdcondquad} can be solved to find its roots, and if $\hat{n}_i(t)$ is greater than the larger root the inequality will never hold.
Solving the quadratic equation~\eqref{eqn:3rdcondquad}, we obtain that event~\eqref{3rdcond} does not hold if
\begin{align*}
\hat{n}_i^k(t) &\geq \frac{ 4 \sigma_g^2 \gamma \ln(t)}{\Delta_i^2MG(\eta)} \!+\! \sqrt{\Big(\frac{4 \gamma \sigma_g^2 \ln(t)}{\Delta_i^2MG(\eta)}\Big)^2 \!+  \frac{8 \sigma_g^2f(t) \gamma \ln(t)}{\Delta_i^2 MG(\eta)}  } \\
& = \frac{4 \sigma_g^2 \gamma \ln t}{\Delta_i^2 MG(\eta)} \left( 1 + \sqrt{1 + \frac{\Delta_i^2 MG(\eta)}{2 \sigma_g^2 \gamma } \frac{f(t)}{\ln t}} \right).
\end{align*}

Now, we set $A = \Big\lceil M \epsilon_n + \frac{4 \sigma_g^2 \gamma \ln T}{\Delta_i^2 G(\eta)} \big( 1 + \sqrt{1 + \frac{ \Delta_i^2 M G(\eta)}{2 \gamma \sigma_g^2} \frac{f(T)}{\ln T}} \big)
\Big\rceil$. It follows from monotonicity of $f(t)$ and $\ln(t)$ and statement (i) of Proposition~\ref{prop:coop-est}
that event~\eqref{3rdcond} does not hold if $M n^{\text{cent}}_i(t) > A$.  

  Therefore, from \eqref{suboptimal-samples} we see that
\begin{align*} 
	&\sum_{k=1}^M  \mathbb{E} \left[ n_i^k(T)\right] \leq \bar{A} +  \sum_{k=1}^M (t_k^\dag-1)\\
	& \quad \qquad + \!\frac{2}{\lnn{1\!+\!\eta}} \sum_{k=1}^M \sum_{t=t_k^\dagger}^T \! \left( \frac{ \lnn{t}}{t^\gamma} \!+\! \frac{\lnn{(1\!+\!\epsilon_n)(1\!+\!\eta)}}{t^\gamma} \right) \\
	&
	\leq \bar{A}+ \sum_{k=1}^M  (t_k^\dagger-1) 
	+ \!\frac{2M}{\lnn{1\!+\!\eta}} \sum_{t=1}^T \! \left( \frac{ \lnn{t}}{t^\gamma} \!+\! \frac{\lnn{(1\!+\!\epsilon_n)(1\!+\!\eta)}}{t^\gamma} \right) \\
	& 
	\leq \bar{A} \!+\! \sum_{k=1}^M  (t_k^\dagger\!-\!1) + \!\frac{2M}{\lnn{1\!+\!\eta}}  \! \Big( \frac{1}{(\gamma - 1)^2} 
	\!+\! {\frac{\gamma \lnn{1\!+\!\epsilon_n)(1\!+\!\eta)}}{\gamma - 1} +1}\Big), 
  \end{align*}  
  where $\bar{A} = \max\{M,A\}$  is chosen 
  to account for the $M$ selections of the $i$-th arm during the initialization phase. 
  
  \section{Pseudocode for coop-UCB2-selective-learning}\label{app:pseudocode2}

\IncMargin{.3em}
\begin{algorithm}[ht!]
  {\footnotesize
   \SetKwInOut{Input}{Input}
   \SetKwInOut{Set}{Set}
   \SetKwInOut{Title}{Algorithm}
   \SetKwInOut{Require}{Require}
   \SetKwInOut{Output}{Output}
   \Input{arms $\until{N}$, agents $\until{M}$\;} 
   
      \Input{parameters $\sigma_g>0$, $\eta>0$, $\gamma>1$, function $f(t)$\;}
   \Output{allocation sequence $i^k(t), t\in \until{T}, k \in \until{M}$\;}

   \medskip

\nl {\bf set} $\hat n_i^k \leftarrow 0, \hat s_i^k \leftarrow 0$, $i\in \until{N},  k \in \until{M}$\;
\smallskip

   \nl \For {$t \in \until{T}$ }{

\If{$t\le N$}{ 
\emph{\% Initialization} 

\smallskip 

\nl \For{each agent $k \in \until{M}$ \smallskip }{ $i^k(t) \leftarrow (t-1+k) \mod N$ \; \smallskip

collect reward $r^k(t)$ \;

}

\smallskip 
}

\nl \Else {

\nl \For{each agent $k \in \until{M}$ \smallskip }{
\For{ each arm $i \in \until{N}$\smallskip }{

$Q_i^k \leftarrow \frac{\hat s_i^k}{\hat n_i^k} + \sigma_g \; \sqrt[]{\frac{2\gamma}{G(\eta)} \cdot \frac{\hat{n}_i^{k} +  f(t-1)}{M\hat{n}_i^{k}}\cdot\frac{ \lnn{t-1}}{\hat{n}_i^{k}}}$\;
}
\smallskip

\emph{\% Compute descending sort indices for $Q_i^k$} \smallskip

$I_i^k  \leftarrow \mathrm{sort\_index}(\setdef{Q_i^k}{i\in\until{N}}, \text{`descend'})$\; \smallskip 

\emph{\% Estimate $k$-best arms} \smallskip

$\mc O_k \leftarrow \{I_1^k, \ldots, I_k^k\}$ \; \smallskip

\emph{\% select the worst arm from $k$-best arms} \smallskip

\For{ each arm $i \in \mc O_k$\smallskip }{
$W_i^k \leftarrow \frac{\hat s_i^k}{\hat n_i^k} - \sigma_g \; \sqrt[]{\frac{2\gamma}{G(\eta)} \cdot \frac{\hat{n}_i^{k} +  f(t-1)}{M\hat{n}_i^{k}}\cdot\frac{ \lnn{t-1}}{\hat{n}_i^{k}}}$\; 

}
\smallskip 

$ i^k(t) \leftarrow \argmin\setdef{W_i^k}{i \in \mc O_k}$ \;

\smallskip

collect reward $r^k(t)$ \; 

}

\smallskip 

}

\nl \For {$i \in \until{N}$ \smallskip }{

\nl update $\bs{\hat n_i}$ and $\bs{\hat s_i}$ using~\eqref{nhatdefn} and~\eqref{shatdefn}\;
\smallskip 
}

}

    \caption{\textit{coop-UCB2-selective-learning}}
  \label{algo:coop-ucb2-selective}}
\end{algorithm} 
\DecMargin{.3em}

  \section{Proof of Theorem~\ref{thm:incorrect-selections}}\label{app:proof-thm3}
  	We begin by noting that
	\begin{align}
&	\sum_{k \neq k^i}  n_i^k(T)  = \sum_{k \neq k^i}  \sum_{t=1}^{T} \mathds{1} \left\{ i^k(t) = i \right\} \nonumber \\
	& = \sum_{k \neq k^i}  \sum_{t=1}^{T} {\left( \mathds{1} \left\{ i^k(t) = i, m_i < m_{b^k} \right\} 
	+ 
	\mathds{1} \left\{ i^k(t) = i, m_i \geq m_{b^k} \right\} \right)} \nonumber \\
	& \leq A \!+ \! \sum_{k=1}^M (t_k^\dag -1) 
	+ \sum_{k \neq k^i}  \sum_{t=t_k^\dag}^{T} \mathds{1} \left\{ i^k(t) = i, m_i < m_{b^k}, Mn_i^{\text{cent}}(t) \geq A \right\} \nonumber \\
	& \qquad + \sum_{k \neq k^i} \sum_{t=t_k^\dag}^{T} \mathds{1} \left\{ i^k(t) = i, m_i \geq m_{b^k}, Mn_i^{\text{cent}}(t) \geq A \right\}, \label{eqn:col_mainsplitting}
	\end{align}
	where $A$ is a constant that will be chosen later.  
	In the case where $m_i < m_{b^k}$, agent $k$ picking arm $i$ implies that there exists an arm $j \in \mathcal{O}_{k}^*$ such that $j \notin \mathcal{O}_{k}(t)$.  Therefore, the following holds:
	\begin{align}
		&\sum_{k \neq k^i}  \sum_{t=t_k^\dag}^{{T}} \mathds{1} \left\{ i^k(t) = i, m_i < m_{b^k}, Mn_i^{\text{cent}}(t) \geq A \right\} \nonumber \\
		& \leq \sum_{k \neq k^i}  \sum_{t_k^\dag-1}^{T-1} \mathds{1} \big\{ Q_i^k(t) \geq Q_j^k(t), \text{for some } j \in \mc O_k^*\setminus \mc O_k(t),  \nonumber \\ 
		& \qquad \qquad \qquad \qquad \qquad  m_i < m_{b^k}, Mn_i^{\text{cent}}(t) \geq A \big\} \nonumber \\
		& \leq \sum_{k \neq k^i}  \sum_{t=t_k^\dag-1}^{T} \sum_{j \in \mathcal{O}_{k}^*} \mathds{1} \big\{ Q_i^k(t) \geq Q_j^k(t),  m_i < m_{b^k}, Mn_i^{\text{cent}}(t) \geq A \big\} \nonumber 
				\end{align}
\begin{align}
		& \leq \sum_{k \neq k^i}   \sum_{j \in \mathcal{O}_{k}^*} \sum_{t=t_k^\dag}^{T} \mathds{1} \big\{ Q_i^k(t) \geq Q_j^k(t),  m_i < m_{b^k}, Mn_i^{\text{cent}}(t) \geq A \big\} \nonumber .
	\end{align}

	As in Theorem \ref{thm:regret-coop-ucb2}, $Q_i^k(t-1) \geq Q_j^k(t-1)$ implies that at least one of the following three conditions must hold for any $j \in O^*_{k}$:
	\begin{align}
		\hat{\mu}_{j}(t) &\leq m_{j} - C_{j}^k(t) \label{1stcond_col} \\
		\hat{\mu}_{i}(t) &\geq m_{i} + C_{i}^k(t) \label{2ndcond_col} \\
		m_{j} &< m_{i} + 2 C_{i}^k(t). \label{3rdcond_col} 
	\end{align}
	The first two equations are bounded using Theorem \ref{thm:EstDevBoudnsCondensed} as in the proof of Theorem~\ref{thm:regret-coop-ucb2}.  The third equation is equivalent to 
	\begin{equation*}
		2 C_i^k(t) > \Delta_{j,i} > \Delta_{\text{min}},
	\end{equation*}
	which, as in the proof of Theorem \ref{thm:regret-coop-ucb2}, does not hold if
	\begin{equation*}
		n_i^k(t) >   \frac{4 \sigma_g^2 \gamma  }{\Delta_{\min}^2 G(\eta)} \left( 1 + \sqrt{1  + \frac{\Delta_{\min}^2 MG(\eta)}{2 \sigma_g^2 \gamma} \frac{f(T)}{\ln T}} \right)  \ln T.
	\end{equation*}
	Therefore, for
	\begin{equation*}
		A  =  \left \lceil  M \epsilon_n  +  \frac{4 \sigma_g^2 \gamma  }{\Delta_{\min}^2 G(\eta)}  \left( 1 + \sqrt{1  +  \frac{\Delta_{\min}^2 MG(\eta)}{2 \sigma_g^2 \gamma} \frac{f(T)}{\ln T}} \right)  \ln T \right \rceil,
	\end{equation*}
	\eqref{3rdcond_col} does not hold.
	This results in 
	\begin{align}
			& \sum_{k \neq k^i}   \sum_{j \in \mathcal{O}_{k}^*} \sum_{t=t_k^\dag -1 }^{T} \mathds{1} \big\{ Q_i^k(t-1) \geq Q_j^k(t-1),  m_i < m_{b^k}, Mn_i^{\text{cent}}(t) \geq A \big\} \nonumber \\
			& \leq  \sum_{k \neq k^i}   \sum_{j \in \mathcal{O}_{k}^*}  \frac{2}{\lnn{1\!+\!\eta}}   \left( \frac{1}{(\gamma - 1)^2} +  {\frac{\gamma \lnn{1 + \epsilon_n)(1 + \eta)}}{\gamma - 1} +1} \right) \nonumber \\
				& \leq  \frac{M(M+1)}{\lnn{1\!+\!\eta}}   \left( \frac{1}{(\gamma - 1)^2} +  {\frac{\gamma \lnn{1 + \epsilon_n)(1 + \eta)}}{\gamma - 1} +1} \right). \label{eqn:Collisions_Selections_P1} 
	\end{align}

	We now examine the second part of \eqref{eqn:col_mainsplitting} when $m_i \geq m_{b^k}$ and split the conditional as
	\begin{align}
		\mathds{1} &\left\{ i^k(t) \! = \! i, m_i \! \geq \! m_{b^k}, Mn_i^{\text{cent}}(t) \geq A \right\} \nonumber \\
		& = \mathds{1} \left\{ i^k(t) \! = \! i, m_i \! \geq \! m_{b^k}, Mn_i^{\text{cent}}(t) \! \geq \! A , \mathcal{O}_{\omega^k}(t) = \mathcal{O}_{\omega^k}^* \right\} \nonumber \\
		&\quad + \mathds{1} \left\{ i^k(t) \! = \! i, m_i \! \geq \! m_{b^k}, Mn_i^{\text{cent}}(t) \! \geq \! A , \mathcal{O}_{\omega^k}(t) \neq \mathcal{O}_{\omega^k}^* \right \} \nonumber \\
		&\leq \mathds{1} \left\{ m_i \! \geq \! m_{b^k}, Mn_i^{\text{cent}}(t) \! \geq \! A ,  W_i^k(t \! - \! 1) \leq W_{b^k}^k(t \! - \! 1) \right \} \nonumber \\
		&\quad + \mathds{1} \left\{ m_i \! \geq \! m_{b^k}, Mn_i^{\text{cent}}(t) \! \geq \! A , W_i^k(t \! - \! 1) \leq W_{h}^k(t \! - \! 1) \right \} \label{eqn:secondary_splitting_pre}
	\end{align}
	for any arm $h\notin \mathcal{O}_{k}^*$. The two indicator functions in~\eqref{eqn:secondary_splitting_pre} can be combined as follows:
		\begin{equation*}	
		\eqref{eqn:secondary_splitting_pre} = \mathds{1} \left\{ m_i \! \geq \! m_{b^k}, Mn_i^{\text{cent}}(t) \! \geq \! A , W_i^k(t \! - \! 1) \leq W_{j}^k(t \! - \! 1) \right \},
	\end{equation*}
	for any $j\notin \mathcal{O}_{k}^* \setminus \{b^k\}$.
%
	This results in 
	\begin{align}
		& \sum_{k \neq k^i} \sum_{t=t_k^\dag}^{T} \mathds{1} \left\{ i^k(t) = i, m_i \geq m_{b^k}, Mn_i^{\text{cent}}(t) \geq A \right\} \nonumber \\
		& \qquad \leq \sum_{k \neq k^i} \sum_{j\notin  \mathcal{O}_{k}^* \setminus \{ b^k\}} \sum_{t=t_k^\dag}^{T} \mathds{1} \big\{ m_i \! \geq \! m_{b^k}, Mn_i^{\text{cent}}(t) \! \geq \! A , \nonumber \\
		& \qquad \qquad \qquad \qquad \qquad   W_i^k(t \! - \! 1) \leq W_{j}^k(t \! - \! 1) \big \}. \label{eqn:Collisions_Selections_P2}
	\end{align}
	 For $W_i^k(t) \leq W_{j}^k(t) $ to be true, at least one of the following must hold:
	\begin{align}
		\hat{\mu}_{i}(t) &\leq m_{i} - C_{i}^k(t) \label{1stcond_col2} \\
		\hat{\mu}_{j}(t) &\geq m_{j} + C_{j}^k(t) \label{2ndcond_col2} \\
		m_{i} &< m_{j} + 2 C_{j}^k(t). \label{3rdcond_col2} 
	\end{align}
	\eqref{1stcond_col2} and \eqref{2ndcond_col2} can be bounded using Theorem \ref{thm:EstDevBoudnsCondensed}. As before, \eqref{3rdcond_col2} never holds due to our choice of $A$.  Similarly to \eqref{eqn:Collisions_Selections_P1} 
	\begin{align}
	&\sum_{k \neq k^i} \sum_{t=1}^{T} \prob \left( i^k(t) = i, m_i \geq m_{b^k}, Mn_i^{\text{cent}}(t) \geq A \right) \nonumber \\
		& \leq \sum_{k \neq k^i} \sum_{j \notin \mathcal{O}_{k}^* \setminus \{b^k\}} \! \frac{2}{\lnn{1\!+\!\eta}}   \left( \frac{1}{(\gamma - 1)^2} +  {\frac{\gamma \lnn{1 + \epsilon_n)(1 + \eta)}}{\gamma - 1} +1} \right)  \nonumber\\
				& \leq \frac{2NM -M(M-1)}{\lnn{1\!+\!\eta}}   \left( \frac{1}{(\gamma - 1)^2} +  {\frac{\gamma \lnn{1 + \epsilon_n)(1 + \eta)}}{\gamma - 1} +1} \right).  \label{eqn:Collisions_Selections_P3} 	
	\end{align}
Using \eqref{eqn:col_mainsplitting}, \eqref{eqn:Collisions_Selections_P1}, and~\eqref{eqn:Collisions_Selections_P3} and accounting for the selections of arm $i$ during the initialization as in the proof of Theorem~\ref{thm:regret-coop-ucb2}, we obtain the bound in the theorem statement. 

\end{document}